\newtheorem{theorem}{Theorem}[section]
\newtheorem{lemma}[theorem]{Lemma}
\newtheorem{corollary}[theorem]{Corollary}
\title{Spectral expansion properties of pseudorandom bipartite graphs}
\author{Dandan Fan$^{a,b}$, Xiaofeng Gu$^c$, Huiqiu Lin$^{a}$
\\[2mm]
\small\it $^a$ School of Mathematics, East China University of Science and Technology, \\ \small\it   Shanghai 200237, China\\[1mm]
\small\it $^b$ College of Mathematics and Physics, Xinjiang Agricultural University,\\ \small\it Urumqi, Xinjiang 830052, China\\[1mm]
\small\it $^c$ Department of Computing and Mathematics, University of West Georgia, \\ \small\it Carrollton, GA 30118, USA
\\ \footnotesize {\it Email}: {\tt ddfan0526@163.com} (D. Fan), {\tt xgu@westga.edu} (X. Gu), {\tt huiqiulin@126.com} (H. Lin)
}
\begin{document}
\date{}
\maketitle
\noindent

\begin{abstract}
An $(a,b)$-biregular bipartite graph is a bipartite graph with bipartition $(X, Y)$ such that each vertex in $X$ has degree $a$ and each vertex in $Y$ has degree $b$. By the bipartite expander mixing lemma, biregular bipartite graphs have nice pseudorandom and expansion properties when the second largest adjacency eigenvalue is not large. In this paper, we prove several explicit properties of biregular bipartite graphs from spectral perspectives. In particular, we show that for any $(a,b)$-biregular bipartite graph $G$, if the spectral gap is greater than $\frac{2(k-1)}{\sqrt{(a+1)(b+1)}}$, then $G$ is $k$-edge-connected; and if the spectral gap is at least $\frac{2k}{\sqrt{(a+1)(b+1)}}$, then $G$ has at least $k$ edge-disjoint spanning trees. We also prove that if the spectral gap is at least $\frac{(k-1)\max\{a,b\}}{2\sqrt{ab - (k-1)\max\{a,b\}}}$, then $G$ is $k$-connected for $k\ge 2$; and if the spectral gap is at least $\frac{6k+2\max\{a,b\}}{\sqrt{(a-1)(b-1)}}$, then $G$ has at least $k$ edge-disjoint spanning 2-connected subgraphs. We have stronger results in the paper.
\end{abstract}

{\small \noindent {\bf MSC 2020:} 05C50, 05C48, 05C40, 05C70}

{\small \noindent {\bf Key words:} eigenvalue, pseudorandom bipartite graph, biregular graph, connectivity, edge connectivity, spanning tree packing}

\section{Introduction}

An {\it expander} graph is a sparse graph that has strong connectivity properties, quantified using vertex, edge or spectral expansion. Expander graphs have many applications in computer science such as complexity theory, communication network, and the theory of error-correcting codes. Expanders are usually regular graphs, however, unbalanced bipartite expanders are better suited for applications in many situations. In particular, an idea of bipartite expander graphs called {\it magical} graphs can be used to construct super concentrators and good error correcting codes, see~\cite{HLW06} for more details. In this paper, we study bipartite expander properties from spectral perspectives.

Let $\lambda_i:=\lambda_i(G)$ denote the $i$th largest eigenvalue of the adjacency matrix of a simple graph $G$ on $n$ vertices, for $i=1,2,\cdots, n$. By the Perron-Frobenius Theorem, $\lambda_1$ is always positive (unless $G$ has no edges) and $|\lambda_i|\le \lambda_1$ for all $i\ge 2$. If $G$ is not a complete graph, then $\lambda_2$ is nonnegative. Let $\lambda = \max_{2\le i\le n} |\lambda_i| =\max\{|\lambda_2|, |\lambda_n|\}$, that is, $\lambda$ is the second largest absolute eigenvalue. For a $d$-regular graph, it is well known that $\lambda_1=d$. A $d$-regular graph on $n$ vertices with the second largest absolute eigenvalue $\lambda$ is called an {\it $(n,d,\lambda)$-graph}. The Alon-Boppana bound~\cite{Alon86} states that $\lambda \ge 2\sqrt{d-1} - o(1)$. For non-bipartite $d$-regular graph $G$, if $\lambda(G) \le 2\sqrt{d-1}$, then $G$ is a {\it Ramanujan} graph. Basically, Ramanujan graphs are those graphs attaining the Alon-Boppana bound, and thus are expander graphs with optimal asymptotic spectral properties.

A pseudorandom graph with $n$ vertices of edge density $p$ is a graph that behaves like a truly random graph $G(n, p)$. The first quantitative definition of pseudorandom graphs was introduced by Thomason~\cite{Thom85, Thom87} who defined jumbled graphs.  It is well known that an $(n, d, \lambda)$-graph  for which $\lambda = \Theta(\sqrt{d})$ is a very good pseudorandom graph, according to the celebrated expander mixing lemma. For more about pseudorandom graphs, see the survey~\cite{KrSu06} by Krivelevich and Sudakov.

\begin{theorem}[Expander mixing lemma]
Let $G$ be an $(n, d, \lambda)$-graph. Then for every two subsets $A$ and $B$ of $V(G)$, 
\begin{equation*}
\left| e(A, B) - \frac{d}{n} |A| |B| \right| \le \lambda \sqrt{|A| |B| \left(1-\frac{|A|}{n}\right)\left(1-\frac{|B|}{n}\right)},
\end{equation*}
where $e(A, B)$ denotes the number of edges with one end in $A$ and the other end in $B$ $($edges with both ends in $A\cap B$ are counted twice$)$.
\end{theorem}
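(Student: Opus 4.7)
The plan is to establish the mixing lemma via a spectral decomposition of the adjacency matrix $M$ of the $(n,d,\lambda)$-graph $G$. Since $G$ is $d$-regular on $n$ vertices, the all-ones vector $\mathbf{1}$ is an eigenvector of $M$ with eigenvalue $\lambda_1 = d$; after normalization, $v_1 = \mathbf{1}/\sqrt{n}$ extends to an orthonormal eigenbasis $v_1, v_2, \dots, v_n$ with corresponding eigenvalues $d = \lambda_1 \ge \lambda_2 \ge \cdots \ge \lambda_n$, all satisfying $|\lambda_i| \le \lambda$ for $i \ge 2$.

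First I would identify $e(A,B)$ with a quadratic form: letting $\chi_A, \chi_B \in \{0,1\}^n$ denote the indicator vectors of $A$ and $B$, one has $e(A,B) = \chi_A^{T} M \chi_B$, where the convention of counting edges inside $A \cap B$ twice matches the fact that both endpoints are recorded on both sides. Next I would split each indicator along $v_1$ and its orthogonal complement: $\chi_A = \tfrac{|A|}{\sqrt{n}}\, v_1 + \chi_A^{\perp}$ and $\chi_B = \tfrac{|B|}{\sqrt{n}}\, v_1 + \chi_B^{\perp}$, since $\langle \chi_A, v_1\rangle = |A|/\sqrt{n}$ and likewise for $B$. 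Parseval then yields $\|\chi_A^{\perp}\|^2 = |A| - |A|^2/n = |A|(1 - |A|/n)$ and the analogous identity for $B$.

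Substituting into the quadratic form and using $M v_1 = d v_1$ together with $\langle v_1, \chi_A^{\perp}\rangle = \langle v_1, \chi_B^{\perp}\rangle = 0$, the cross terms vanish and leave
\[
\chi_A^{T} M \chi_B \;=\; \frac{d\,|A|\,|B|}{n} \;+\; (\chi_A^{\perp})^{T} M \chi_B^{\perp}.
\]
Writing $\chi_A^{\perp} = \sum_{i\ge 2} a_i v_i$ and $\chi_B^{\perp} = \sum_{i\ge 2} b_i v_i$ gives $(\chi_A^{\perp})^{T} M \chi_B^{\perp} = \sum_{i\ge 2} \lambda_i a_i b_i$, and the bound $|\lambda_i| \le \lambda$ combined with Cauchy--Schwarz produces
\[
\bigl| (\chi_A^{\perp})^{T} M \chi_B^{\perp} \bigr| \;\le\; \lambda\, \|\chi_A^{\perp}\|\, \|\chi_B^{\perp}\| \;=\; \lambda\, \sqrt{|A|\bigl(1 - |A|/n\bigr)\cdot |B|\bigl(1 - |B|/n\bigr)},
\]
which is exactly the inequality asserted.

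This argument is entirely classical, so I do not anticipate any conceptual obstacle; the only step that needs careful bookkeeping is the vanishing of the cross terms in the decomposition of $\chi_A^{T} M \chi_B$, which relies on $M$ being symmetric and on $v_1$ being an eigenvector of $M$, ensuring that $M v_1$ remains orthogonal to both $\chi_A^{\perp}$ and $\chi_B^{\perp}$. Once the orthogonal decomposition is in place, the uniform spectral bound $|\lambda_i|\le \lambda$ and Cauchy--Schwarz close the argument directly.
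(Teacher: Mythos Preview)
Your argument is the standard and correct spectral proof of the expander mixing lemma: express $e(A,B)=\chi_A^{T}M\chi_B$, split off the $v_1=\mathbf{1}/\sqrt{n}$ component, compute $\|\chi_A^{\perp}\|^2=|A|(1-|A|/n)$ via Parseval, and bound the remainder by $\lambda\|\chi_A^{\perp}\|\|\chi_B^{\perp}\|$ using Cauchy--Schwarz. There is no gap.

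However, there is nothing to compare against: the paper does not prove this theorem. It is stated in the introduction as a classical background result, attributed to Alon--Chung (with an earlier appearance in Haemers' thesis), and is used only to motivate the bipartite version (Theorem~\ref{bieml}), which is likewise quoted without proof. The paper's own contributions begin with Theorems~\ref{thm:edge-conn}--\ref{thm:globally-rigid}, all of which take the bipartite expander mixing lemma as a black box.
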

The expander mixing lemma is usually attributed to Alon and Chung~\cite{AlCh88}. However, this idea was used earlier with a different form in the PhD thesis~\cite{Haem79} of Haemers. In fact, Haemers provided the expander mixing lemma for bipartite graphs.

An $(X, Y)$-bipartite graph is a bipartite graph with bipartition $(X, Y)$. An $(X, Y)$-bipartite graph is called {\it $(a,b)$-biregular} if each vertex in $X$ has degree $a$ and each vertex in $Y$ has degree $b$. 
Note that we must have $a|X|=b|Y|$. It is known that $\lambda_1(G) =\sqrt{ab}$ and $\lambda_n(G) =-\sqrt{ab}$ for any $(a,b)$-biregular bipartite graph $G$ on $n$ vertices. Godsil and Mohar~\cite{GM88}, Feng and Li~\cite{FL96}, and Li and Sol\'e~\cite{LS96} proved an analog of Alon-Boppana bound that $\lambda_2 \ge \sqrt{a-1} +\sqrt{b-1} -o(1)$ for $(a,b)$-biregular bipartite graphs. An $(a,b)$-biregular bipartite graph $G$ is {\it Ramanujan} if $\lambda_2(G) \le \sqrt{a-1} +\sqrt{b-1}$. In other words, biregular bipartite Ramanujan graphs are those who attain the optimal lower bound of $\lambda_2$. When $a=b$, we obtain regular bipartite Ramanujan graphs. It is proven in \cite{MSS15} that there are infinite many $(a,b)$-biregular bipartite Ramanujan graphs for $a,b\ge 3$. Explicit examples of biregular bipartite Ramanujan graphs were constructed in~\cite{EFMP23}. For random biregular bipartite graphs, the second largest eigenvalue has been studied in \cite{BDH22} and \cite{Zhu23}, while the the spectral distribution was studied in \cite{DJ16} and \cite{Tran20}.

For convenience, an $(a,b)$-biregular $(X, Y)$-bipartite graph is also called an {\it $(X, Y, a, b)$-graph}. The following bipartite expander mixing lemma first appeared in~\cite[Theorem~3.1.1]{Haem79} and also in \cite[Theorem 5.1]{Haem95}, but this exact form and the proof can be found in~\cite{DeSV12}.

\begin{theorem}[Bipartite expander mixing lemma]
\label{bieml}
Let $G$ be an $(X, Y, a, b)$-graph on at least $3$ vertices. Then for every two subsets $A\subseteq X$ and $B\subseteq Y$, 
\begin{equation*}
\left| e(A, B) -\frac{\sqrt{ab}}{\sqrt{|X| |Y|}} |A| |B| \right| \le \lambda_2 \sqrt{|A| |B| \left(1-\frac{|A|}{|X|}\right)\left(1-\frac{|B|}{|Y|}\right)}.
\end{equation*}
\end{theorem}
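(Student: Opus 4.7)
The plan is to reduce the statement to a singular value computation on the $|X|\times|Y|$ biadjacency matrix $N$ of $G$. First, I would record the standard observation that the nonzero eigenvalues of the full adjacency matrix $\left(\begin{smallmatrix} 0 & N \\ N^T & 0 \end{smallmatrix}\right)$ are precisely $\pm \sigma_i$ for the singular values $\sigma_1\ge \sigma_2\ge \cdots$ of $N$, so that $\sigma_1 = \sqrt{ab}$ and $\sigma_2 \le \lambda_2$. Biregularity then immediately identifies a top singular pair: using $N\mathbf{1}_Y = a\mathbf{1}_X$, $N^T\mathbf{1}_X = b\mathbf{1}_Y$, and $a|X|=b|Y|$, the unit vectors $u := \mathbf{1}_X/\sqrt{|X|}$ and $v := \mathbf{1}_Y/\sqrt{|Y|}$ satisfy $Nv = \sqrt{ab}\,u$ and $N^T u = \sqrt{ab}\,v$.

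Next I would express $e(A,B) = \mathbf{1}_A^T N \mathbf{1}_B$ in the SVD basis by orthogonally decomposing $\mathbf{1}_A = \alpha u + f$ and $\mathbf{1}_B = \beta v + g$ with $f\perp u$ and $g\perp v$. Taking inner products yields $\alpha = |A|/\sqrt{|X|}$, $\beta = |B|/\sqrt{|Y|}$, $\|f\|^2 = |A|(1-|A|/|X|)$, and $\|g\|^2 = |B|(1-|B|/|Y|)$. Expanding the bilinear form and using $Nv = \sqrt{ab}\,u$ together with $u^T N = \sqrt{ab}\,v^T$, the two cross-terms $\alpha\, u^T N g$ and $\beta\, f^T N v$ vanish by orthogonality, leaving
\[
e(A,B) \;=\; \alpha\beta\sqrt{ab} + f^T N g \;=\; \frac{\sqrt{ab}}{\sqrt{|X||Y|}}|A||B| \;+\; f^T N g.
\]

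It then remains to show $|f^T N g|\le \lambda_2\,\|f\|\,\|g\|$, which is exactly the stated bound. For this I would expand $f$ and $g$ in orthonormal bases of the remaining left and right singular vectors of $N$; because $f\perp u$ and $g\perp v$ strip off the top singular direction, each contributing singular value is at most $\sigma_2 \le \lambda_2$, and a single application of Cauchy--Schwarz finishes the estimate.

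The main obstacle, as I anticipate it, is not one hard step but the care required in the SVD setup: verifying $\sigma_2 \le \lambda_2$ in the regime where $\sqrt{ab}$ might have higher multiplicity or where $N$ is degenerate (this is where the $n\ge 3$ hypothesis rules out the trivial case $|X|=|Y|=1$, $\lambda_2=-\sqrt{ab}$), checking that the top singular direction is genuinely simple within the subspace we need, and handling the orthonormal expansion cleanly when $|X|\ne |Y|$. Once this bookkeeping is in place, the remainder is essentially one line of orthogonality and Cauchy--Schwarz.
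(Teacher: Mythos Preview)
Your approach is correct and is in fact the standard argument for the bipartite expander mixing lemma: decompose the indicator vectors orthogonally along the top singular pair $(u,v)$ of the biadjacency matrix $N$, use $Nv=\sqrt{ab}\,u$ and $N^Tu=\sqrt{ab}\,v$ to kill the cross-terms, and bound the remaining bilinear form $f^TNg$ by $\sigma_2\|f\|\|g\|$ via the SVD. The bookkeeping you flag is routine; in particular the eigenvalues of $\left(\begin{smallmatrix}0&N\\N^T&0\end{smallmatrix}\right)$ are exactly $\pm\sigma_i$ together with the appropriate number of zeros, so for $n\ge 3$ one has $\lambda_2\ge 0$ by the symmetry of the bipartite spectrum and hence $\sigma_2=\lambda_2$ on the nose (not merely $\sigma_2\le\lambda_2$), with the degenerate case $\min(|X|,|Y|)=1$ forcing $f=0$ or $g=0$ and making the bound trivial.

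There is, however, nothing to compare against in the paper itself: Theorem~\ref{bieml} is \emph{quoted}, not proved. The authors attribute it to Haemers' thesis and to Haemers' interlacing paper, and point to De~Winter--Schillewaert--Verstraete for this exact formulation and a proof. Your SVD argument is essentially the one given in that last reference, so you are reproducing the intended external proof rather than deviating from anything the present paper does.
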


Note that $a|X| =b|Y|$ and it follows that $\frac{\sqrt{ab}}{\sqrt{|X| |Y|}} =\frac{b}{|X|} =\frac{a}{|Y|}$. Since the spectrum of any bipartite graph is symmetric about zero, $\lambda_2$ actually is the third largest absolute eigenvalue. As mentioned in \cite{DeSV12}, the bipartite expander mixing lemma is especially applicable when $\lambda_2$ is small. In this case, for any two subsets $A\subseteq X$ and $B\subseteq Y$, $e(A, B)$ is close to $\frac{\sqrt{ab}}{\sqrt{|X| |Y|}} |A| |B|$. The edge distribution is similar to the random bipartite graph and thus $(X, Y, a, b)$-graphs are {\it pseudorandom bipartite graphs}.

\section{Main results}\label{sect:main}
In this section, we discover several explicit expander-related properties of biregular bipartite graphs, including edge connectivity, vertex connectivity, spanning tree packing number, as well as rigidity in the Euclidean plane. These extend results of pseudorandom graphs to pseudorandom bipartite graphs.

\subsection{Edge connectivity}
The \textit{edge connectivity} of a graph $G$, denoted by $\kappa'(G)$, is the minimum number of edges whose removal produces a disconnected graph. If $\kappa'(G)\ge k$, then $G$ is $k$-edge-connected. Krivelevich and Sudakov~\cite{KrSu06} showed that any $(n,d,\lambda)$-graph $G$ with $d-\lambda\ge 2$ is $d$-edge-connected. The result was improved by Cioab\u{a}~\cite{Cioa10} using $\lambda_2$. In fact, Cioab\u{a}~\cite{Cioa10} proved that for $d \ge k\ge 2$, if $G$ is a $d$-regular graph of order $n$ with $\lambda_2(G)< d - \frac{n(k-1)}{(d+1)(n-d-1)}$, then $\kappa'(G)\ge k$; and in particular, if $\lambda_2(G)< d-\frac{2(k-1)}{d+1}$, then $\kappa'(G)\ge k$. These imply corresponding results in pseudorandom graphs. We would also like to point out that the results were generalized to irregular cases in \cite{GLLY12, LiHL13}. 
Here we prove an analogue for biregular bipartite graphs.

\begin{theorem}\label{thm:edge-conn}
Let $G$ be an $(X, Y, a, b)$-graph with $a,b\ge k$. If 
\begin{eqnarray}
\label{kappa1}
\lambda_2 < \sqrt{ab} - \frac{(k-1)\sqrt{|X||Y|}}{2\sqrt{\Big\lceil\frac{a+1}{2}\Big\rceil\Big\lceil\frac{b+1}{2}\Big\rceil\Big(|X|-\Big\lceil\frac{b+1}{2}\Big\rceil\Big)\Big(|Y|-\Big\lceil\frac{a+1}{2}\Big\rceil\Big)}},
\end{eqnarray}
then $\kappa'(G)\ge k$. In particular, if
\begin{eqnarray}
\label{kappa2}
\lambda_2 < \sqrt{ab} - \frac{k-1}{\sqrt{\Big\lceil\frac{a+1}{2}\Big\rceil\Big\lceil\frac{b+1}{2}\Big\rceil}},
\end{eqnarray}
then $\kappa'(G)\ge k$.
\end{theorem}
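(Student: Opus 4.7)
The plan is a proof by contradiction in the spirit of Cioab\u{a}'s argument for regular graphs. Suppose $\kappa'(G)\le k-1$ and fix an edge cut $[S_1,S_2]$ with $|E(S_1,S_2)|\le k-1$. Writing $A=S_1\cap X$ and $B=S_1\cap Y$, the degree identity $|E(S_1,S_2)|=a|A|+b|B|-2e(A,B)$ gives
\[
e(A,B)\ \ge\ \frac{a|A|+b|B|-(k-1)}{2}.
\]
Because $a,b\ge k$, none of $A$, $B$, $X\setminus A$, $Y\setminus B$ can be empty; for instance, $A=\emptyset$ would force $|E(S_1,S_2)|=b|B|\ge b\ge k$.

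The key preliminary step is to derive $|A|,|X\setminus A|\ge\lceil(b+1)/2\rceil$ and $|B|,|Y\setminus B|\ge\lceil(a+1)/2\rceil$. Combining the displayed inequality with the trivial bound $e(A,B)\le|A||B|$ yields $a|A|+b|B|\le 2|A||B|+(k-1)$, which rewrites as $|B|(2|A|-b)\ge a|A|-(k-1)\ge 1$. Since $|B|\ge 1$, this forces $2|A|>b$, so $|A|\ge\lceil(b+1)/2\rceil$; the analogous rearrangement $|A|(2|B|-a)\ge b|B|-(k-1)\ge 1$ gives $|B|\ge\lceil(a+1)/2\rceil$. Running the same argument with the complementary side $S_2$ of the cut yields the matching lower bounds on $|X\setminus A|$ and $|Y\setminus B|$.

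Next, apply the bipartite expander mixing lemma (Theorem~\ref{bieml}) to $(A,B)$ to upper-bound $e(A,B)$. Using $\tfrac{\sqrt{ab}}{\sqrt{|X||Y|}}|Y|=a$ and $\tfrac{\sqrt{ab}}{\sqrt{|X||Y|}}|X|=b$, the numerator $a|A|+b|B|-2\tfrac{\sqrt{ab}}{\sqrt{|X||Y|}}|A||B|$ becomes $\tfrac{\sqrt{ab}}{\sqrt{|X||Y|}}\bigl(|A|(|Y|-|B|)+|B|(|X|-|A|)\bigr)$, and AM-GM on the two terms of this bracket produces
\[
\lambda_2\ \ge\ \sqrt{ab}\ -\ \frac{(k-1)\sqrt{|X||Y|}}{2\sqrt{|A||B|(|X|-|A|)(|Y|-|B|)}}.
\]
By concavity of $f(x)=x(N-x)$ together with the size bounds above, $|A|(|X|-|A|)\ge\lceil(b+1)/2\rceil(|X|-\lceil(b+1)/2\rceil)$ and analogously $|B|(|Y|-|B|)\ge\lceil(a+1)/2\rceil(|Y|-\lceil(a+1)/2\rceil)$. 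Substituting back contradicts (\ref{kappa1}), and (\ref{kappa2}) follows since $|X|\ge 2\lceil(b+1)/2\rceil$ and $|Y|\ge 2\lceil(a+1)/2\rceil$ (from the same size bounds) imply $|X|-\lceil(b+1)/2\rceil\ge|X|/2$ and $|Y|-\lceil(a+1)/2\rceil\ge|Y|/2$.

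The main obstacle is the sharp size bound: a direct single-vertex degree argument only yields $|A|\ge b-k+1$, strictly weaker than $\lceil(b+1)/2\rceil$ when $b<2k-1$. Using the trivial $e(A,B)\le|A||B|$ in conjunction with the cut inequality is exactly what bridges this gap and delivers the clean ceiling expression appearing in the statement.
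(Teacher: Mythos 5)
Your proposal is correct and follows essentially the same route as the paper: the same contradiction setup, the same trick of combining the cut-size identity with $e(A,B)\le|A||B|$ to force $|A|\ge\lceil(b+1)/2\rceil$ (the paper states it contrapositively), the same AM--GM step, the same endpoint minimization of $s(x-s)$, and the same halving argument to deduce (\ref{kappa2}). The only cosmetic difference is that you apply the mixing lemma once to the internal pair $(A,B)$ and upper-bound $e(A,B)$, whereas the paper applies it twice to the cross pairs $(S,T')$ and $(S',T)$ and lower-bounds the cut; since $e(A,V-A)=a|A|+b|B|-2e(A,B)$, the two computations yield the identical inequality.
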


Note that $\lambda_1 -\lambda_2$ is usually called the {\it spectral gap} and $\lambda_1 = \sqrt{ab}$ for  $(X, Y, a, b)$-graphs. The spectral gap of biregular bipartite graphs has been shown to be very useful in error correcting codes, matrix completion and community detection (see \cite{BDH22,BV20,SS96}, among others). The above theorem indicates that if the spectral gap is greater than $\frac{k-1}{\sqrt{\lceil\frac{a+1}{2}\rceil\lceil\frac{b+1}{2}\rceil}}$, then $G$ is $k$-edge-connected. 

\subsection{Vertex connectivity}
The \textit{vertex connectivity} of a graph $G$, denoted by $\kappa(G)$, is the minimum number of vertices whose removal produces a disconnected graph. If $\kappa(G)\ge k$, then $G$ is \textit{$k$-connected}. By a result of Fiedler~\cite{F73} on algebraic connectivity, for a non-complete graph $G$, $\kappa(G)\ge \delta -\lambda_2$, where $\delta$ is the minimum degree of $G$. For any $(n,d,\lambda)$-graph $G$ with $d\leq n/2$, Krivelevich and Sudakov\cite{KrSu06} showed that $\kappa(G)\ge d-36\lambda^2/d$, and the result was generalized in \cite{Gu21}. We study the vertex connectivity of biregular bipartite graphs.

\begin{theorem}\label{thm:conn}
Let $G$ be an $(X,Y,a,b)$-graph with $a,b\ge k\ge 2$. If 
\begin{eqnarray}\label{eqn:kcon}
\lambda_2\le \sqrt{ab} -\frac{(k-1)\max\{a,b\}}{2\sqrt{ab - (k-1)\max\{a,b\}}},
\end{eqnarray}
then $\kappa(G)\ge k$.
\end{theorem}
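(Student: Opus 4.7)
The plan is to argue by contradiction and combine two applications of the bipartite expander mixing lemma (Theorem~\ref{bieml}). Assume $\kappa(G)\le k-1$ and let $T$ be a minimum vertex cut with $T_X = T\cap X$, $T_Y = T\cap Y$, so $|T_X|+|T_Y|\le k-1$. Since $a,b\ge k$, we have $|T_X|<b$ and $|T_Y|<a$, which forces every component $C$ of $G-T$ to meet both $X$ and $Y$: if $C\subseteq Y$, then each vertex of $C$ would have all $b$ of its neighbors in $T_X$, forcing $|T_X|\ge b$, a contradiction. Counting neighbors in $Y$ of $C\cap X$ (and symmetrically) then gives $|C\cap X|\ge b-|T_X|$ and $|C\cap Y|\ge a-|T_Y|$ for every component.

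Next I would partition $V(G)\setminus T = V_1\cup V_2$ into two nonempty unions of components and set $A = V_1\cap X$, $B = V_2\cap Y$, $A' = V_2\cap X$, $B' = V_1\cap Y$. All four sets are nonempty with $|A|,|A'|\ge b-|T_X|$ and $|B|,|B'|\ge a-|T_Y|$, and $e(A,B)=e(A',B')=0$. Applying Theorem~\ref{bieml} to each pair, together with the identities $|X|-|A| = |A'|+|T_X|$ and $|Y|-|B| = |B'|+|T_Y|$, yields
\begin{align*}
ab\,|A||B| &\le \lambda_2^2\,(|A'|+|T_X|)(|B'|+|T_Y|),\\
ab\,|A'||B'| &\le \lambda_2^2\,(|A|+|T_X|)(|B|+|T_Y|).
\end{align*}
Multiplying these two inequalities and cancelling via $\tfrac{|A|+|T_X|}{|A|}\le \tfrac{b}{b-|T_X|}$ and the three analogous bounds, the four sizes $|A|,|A'|,|B|,|B'|$ disappear, leaving the clean inequality
\[
(a-|T_Y|)(b-|T_X|) \le \lambda_2^2.
\]

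A routine optimization shows that $\min\{(a-y)(b-x): x+y\le k-1,\ x,y\ge 0\} = ab - (k-1)\max\{a,b\}$, attained at $(x,y)=(k-1,0)$ when $a\ge b$ (mirrored otherwise), so $\lambda_2^2 \ge ab - (k-1)\max\{a,b\}$. Setting $Q = \sqrt{ab-(k-1)\max\{a,b\}}$, the identity $\sqrt{ab}-Q = \tfrac{(k-1)\max\{a,b\}}{\sqrt{ab}+Q} < \tfrac{(k-1)\max\{a,b\}}{2Q}$ (strict because $k\ge 2$ makes $\sqrt{ab}>Q$) shows that hypothesis~\eqref{eqn:kcon} forces $\lambda_2 < Q$, contradicting $\lambda_2^2\ge Q^2$. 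The main obstacle is that one BEML application alone is useless here---degree counting forces $(|X|-|A|)(|Y|-|B|)\ge |A||B|$, so the induced lower bound on $\lambda_2$ never exceeds $\sqrt{ab}$---which is precisely why pairing the two ``sides'' of the cut and combining all four degree-based lower bounds simultaneously is essential.
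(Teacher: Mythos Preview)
Your argument is correct and genuinely different from the paper's. Both proofs set up the same size bounds $|C\cap X|\ge b-|T_X|$, $|C\cap Y|\ge a-|T_Y|$ for components of $G-T$ and both invoke Theorem~\ref{bieml} twice, but they combine the two applications in opposite ways. The paper picks the component $A_1$ with fewest edges to the cut, bounds $e(A_1,V\setminus A_1)$ \emph{above} by $\tfrac{1}{2}(k_1a+k_2b)$ via pigeonhole on the edges incident to $T$, bounds it \emph{below} by applying BEML to $(A_1\cap X,\,Y\setminus(A_1\cap Y))$ and $(X\setminus(A_1\cap X),\,A_1\cap Y)$, adds the two estimates, and uses AM--GM to reach $(\sqrt{ab}-\lambda_2)\sqrt{(a-k_2)(b-k_1)}<\tfrac{1}{2}(k-1)\max\{a,b\}$. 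Your route instead exploits the \emph{vanishing} of $e(A,B)$ and $e(A',B')$, squares each BEML inequality, multiplies the two, and lets the four ratio bounds $\tfrac{|A|+|T_X|}{|A|}\le\tfrac{b}{b-|T_X|}$ (etc.) cancel all set sizes simultaneously, landing on the clean inequality $(a-|T_Y|)(b-|T_X|)\le\lambda_2^2$.

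Your multiplicative combination buys something concrete: the intermediate conclusion $\lambda_2\ge Q:=\sqrt{ab-(k-1)\max\{a,b\}}$ is strictly sharper than what falls out of the paper's additive route, since $\sqrt{ab}-Q=\tfrac{(k-1)\max\{a,b\}}{\sqrt{ab}+Q}<\tfrac{(k-1)\max\{a,b\}}{2Q}$ for $k\ge 2$. In other words, your method actually proves the stronger statement ``$\lambda_2<\sqrt{ab-(k-1)\max\{a,b\}}\Rightarrow\kappa(G)\ge k$'' before specializing to hypothesis~\eqref{eqn:kcon}. The paper's approach, on the other hand, is more in line with the proofs of Theorems~\ref{thm:edge-conn} and~\ref{thm:stp} and reuses the same edge-counting template; it also keeps track of $e(A_1,V\setminus A_1)$ explicitly, which can be useful if one wants bounds depending on $|X|,|Y|$ rather than only on $a,b$.
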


In general, the theorem is not comparable with Fiedler's result. But in many cases, it seems our theorem can be better. For $d$-regular bipartite graphs $G$, the theorem indicates that if $\lambda_2\le d -\frac{(k-1)\sqrt{d}}{2\sqrt{d -k+1}}$, then $\kappa(G)\ge k$, which is stronger than Fiedler's result when $\kappa(G)\le 3d/4$. For any $(a,b)$-biregular bipartite graph with $a\ge b\ge k\ge 2$, it implies that if $\lambda_2\le \sqrt{a}\left(\sqrt{b} -\frac{k-1}{2\sqrt{b -k+1}}\right)$, then $\kappa(G)\ge k$. Apparently, the result would be better if $G$ is unbalanced.

\subsection{Edge-disjoint spanning trees}
The \textit{spanning tree packing number} (or simply \textit{STP number}) of a graph $G$, denoted by $\tau(G)$, is the maximum number of edge-disjoint spanning trees contained in $G$. The well known spanning tree packing theorem (Theorem~\ref{Nash-Williams-Tutte} in the next section) by Nash-Williams~\cite{Nash-Williams} and independently by Tutte~\cite{Tutte} provides a structural characterization of graphs $G$ with $\tau(G)\geq k$. The spanning tree packing number has been well studied from spectral perspectives. In fact, by the connection between edge connectivity and the spanning tree packing number, the result of Cioab\u{a}~\cite{Cioa10} implies that for any $d$-regular connected graph $G$, if $\lambda_{2}(G)<d-\frac{2(2k-1)}{d+1}$ for $d\geq 2k\geq 4$, then $\kappa'(G)\ge 2k$, and consequently $\tau(G)\geq k$. Cioab\u{a} and Wong \cite{CiWo12} conjectured that the sufficient condition can be improved to $\lambda_{2}(G)<d-\frac{2k-1}{d+1}$, and they confirmed the conjecture for $k=2,3$. Gu et al.~\cite{GLLY12} extended it to general graphs and provided some partial results. The conjecture of Cioab\u{a} and Wong \cite{CiWo12} was completely settled in \cite{LHGL14}, in which the authors proved for general graphs $G$ that if $\lambda_2(G)< \delta-\frac{2k-1}{\delta+1}$ for $\delta\geq 2k\geq 4$, then $\tau(G)\geq k$, where $\delta$ denotes the minimum degree of $G$. Stronger results via eigenvalues of other matrices were also obtained in \cite{LHGL14}. Most recently, the results of \cite{LHGL14} have been shown to be essentially best possible in \cite{coppww22} by constructing extremal graphs. Later on, a similar result in pseudorandom graphs was proved in \cite{Gu21} via expander mixing lemma, and a spectral radius condition was obtained in \cite{Fan-Gu-Lin}.

We study the problem in biregular bipartite graphs. Indeed, the result in \cite{LHGL14} for general graphs can be applied to bipartite graphs, however, we can prove the following improved theorem.

\begin{theorem}\label{thm:stp}
Let $G$ be an $(X,Y,a,b)$-graph with $a,b\ge 2k$. If 
\begin{eqnarray*}
\lambda_2\le \sqrt{ab} -\frac{k}{\sqrt{\lceil\frac{a+1}{2}\rceil\lceil\frac{b+1}{2}\rceil}},
\end{eqnarray*}
then $\tau(G)\geq k$.
\end{theorem}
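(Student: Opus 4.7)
The plan is to assume $\tau(G) \le k-1$ and derive a contradiction by combining the Nash-Williams/Tutte spanning tree packing theorem with the bipartite expander mixing lemma (Theorem~\ref{bieml}), following the template of the proof of Theorem~\ref{thm:edge-conn}. By Nash-Williams/Tutte, a violating partition $\pi = \{V_1, \ldots, V_r\}$ exists with $r \ge 2$ and $e_G(\pi) \le k(r-1) - 1$. I would choose such a $\pi$ with $r$ as large as possible; any proper bipartition $V_i = V_i' \cup V_i''$ of a part then fails to produce a violating refinement, forcing $e_G(V_i', V_i'') \ge k+1$, hence $\kappa'(G[V_i]) \ge k+1$ and in particular $|X_i|, |Y_i| \ge k+1$ for every non-singleton $V_i$.

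The case $r = 2$ reduces directly to Theorem~\ref{thm:edge-conn}: then $d(V_1) = e_G(\pi) \le k-1$ would give $\kappa'(G) \le k-1$, but the STP hypothesis is strictly stronger than the simplified edge-connectivity bound~(\ref{kappa2}) guaranteeing $\kappa'(G) \ge k$. So assume $r \ge 3$. Since $\sum_i d(V_i) = 2e_G(\pi) \le 2k(r-1) - 2$ while every singleton part has $d(V_i) \in \{a, b\} \ge 2k$, at least two non-singleton parts $V_1, V_2$ satisfy $d(V_i) \le 2k-1$. Focusing on $V_1$, each $y \in Y_1$ contributes at least $b - |X_1|$ to the cut, so $|Y_1|(b - |X_1|) \le d(V_1) \le 2k-1$; combined with $|Y_1| \ge k+1$ and $b \ge 2k$, this forces $|X_1| \ge b-1$. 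The simultaneous extremal case $|X_1| = b-1, |Y_1| = a-1$ is excluded, because $e(X_1, Y_1) \le (a-1)(b-1)$ would then force $d(V_1) \ge a+b-2 \ge 4k-2 > 2k-1$. Hence $|X_1| \ge \lceil(b+1)/2\rceil$ and symmetrically $|Y_1| \ge \lceil(a+1)/2\rceil$; applying the same analysis to $V_2 \subseteq V \setminus V_1$ gives $|X \setminus X_1| \ge \lceil(b+1)/2\rceil$ and $|Y \setminus Y_1| \ge \lceil(a+1)/2\rceil$.

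Now apply Theorem~\ref{bieml} with $A = X_1$, $B = Y_1$. Substituting $e(X_1, Y_1) = (a|X_1| + b|Y_1| - d(V_1))/2 \ge (a|X_1| + b|Y_1| - (2k-1))/2$, using the identity $\tfrac{\sqrt{ab}}{\sqrt{|X||Y|}} = \tfrac{b}{|X|} = \tfrac{a}{|Y|}$, and applying AM-GM to $a|X_1|(1 - |Y_1|/|Y|) + b|Y_1|(1 - |X_1|/|X|) \ge 2\sqrt{ab |X_1||Y_1|(1-|X_1|/|X|)(1-|Y_1|/|Y|)}$, the EML rearranges to
\[
(\sqrt{ab} - \lambda_2)\sqrt{|X_1||Y_1|\Bigl(1 - \tfrac{|X_1|}{|X|}\Bigr)\Bigl(1 - \tfrac{|Y_1|}{|Y|}\Bigr)} \le \tfrac{2k-1}{2}.
\]
By concavity of $t \mapsto t(|X|-t)/|X|$ together with the four size bounds, the radicand is at least $\lceil(a+1)/2\rceil\lceil(b+1)/2\rceil(1-\lceil(b+1)/2\rceil/|X|)(1-\lceil(a+1)/2\rceil/|Y|)$; inserting this together with the hypothesis $\sqrt{ab} - \lambda_2 \ge k/\sqrt{\lceil(a+1)/2\rceil\lceil(b+1)/2\rceil}$ produces the desired contradiction.

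The main obstacle is precisely this last step: since the derived right-hand side $(2k-1)/2$ exceeds the hypothetical $k$ by only $1/2$, the concavity factors $(1-\lceil(b+1)/2\rceil/|X|)(1-\lceil(a+1)/2\rceil/|Y|)$ must exceed $(1-1/(2k))^2$ to close the argument. This is the same kind of size-dependent refinement that distinguishes inequality~(\ref{kappa1}) from~(\ref{kappa2}) in Theorem~\ref{thm:edge-conn}, and the existence of the second small-cut part $V_2$ is essential in the $r \ge 3$ regime so that the size lower bounds propagate to both $V_1$ and its complement, making the concavity minima effective on each side.
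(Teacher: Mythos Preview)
Your setup is sensible, but the final step does not close, and you yourself flag the gap without resolving it. From a single part $V_1$ with $d(V_1)\le 2k-1$ and the size bounds $|X_1|,|X|-|X_1|\ge\lceil(b+1)/2\rceil$, $|Y_1|,|Y|-|Y_1|\ge\lceil(a+1)/2\rceil$, the best you can extract is
\[
(\sqrt{ab}-\lambda_2)\sqrt{|X_1||Y_1|\Bigl(1-\tfrac{|X_1|}{|X|}\Bigr)\Bigl(1-\tfrac{|Y_1|}{|Y|}\Bigr)}\ \ge\ \tfrac{1}{2}(\sqrt{ab}-\lambda_2)\sqrt{\lceil\tfrac{a+1}{2}\rceil\lceil\tfrac{b+1}{2}\rceil}\ \ge\ \tfrac{k}{2},
\]
and together with the upper bound $(2k-1)/2$ this only says $k\le 2k-1$, which is no contradiction. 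Your hoped-for inequality $C>(1-1/(2k))^2$ for the concavity factor $C$ is simply false in the worst case $C=1/4$ (take $p=2$ equal-sized small parts and any $k\ge 2$). Knowing there is a second small part $V_2$ does not improve the bound on $C$ for $V_1$ beyond $1/4$.

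The missing idea, and what the paper does, is to sum the EML inequality over \emph{all} $p$ parts with $e(V_i,V-V_i)\le 2k-1$ rather than focus on one of them. Since each such $V_i$ has $|S_i|\ge\lceil(b+1)/2\rceil$ and $|T_i|\ge\lceil(a+1)/2\rceil$, one gets $|X|\ge p\lceil(b+1)/2\rceil$ and $|Y|\ge p\lceil(a+1)/2\rceil$, hence the concavity factor for \emph{each} of the $p$ parts is at least $(p-1)/p$. Summing,
\[
\sum_{i\le p} e(V_i,V-V_i)\ \ge\ 2p\cdot\frac{p-1}{p}\,(\sqrt{ab}-\lambda_2)\sqrt{\lceil\tfrac{a+1}{2}\rceil\lceil\tfrac{b+1}{2}\rceil}\ \ge\ 2(p-1)k,
\]
so $e_G(\pi)\ge (p-1)k + k(t-p)=k(t-1)$, contradicting $e_G(\pi)\le k(t-1)-1$. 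The algebraic coincidence $p\cdot\tfrac{p-1}{p}=p-1$ is exactly what converts the weak per-part bound into the sharp global one; isolating a single part throws this away. Incidentally, the paper obtains the size bounds $2|S_i|\ge b+1$ directly (if $2|S_i|\le b$ then $e(V_i,V-V_i)\ge a|S_i|+ |T_i|(b-2|S_i|)\ge a\ge 2k$), without invoking maximality of $r$.
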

The theorem indicates that if the spectral gap is at least $\frac{k}{\sqrt{\lceil\frac{a+1}{2}\rceil\lceil\frac{b+1}{2}\rceil}}$, then $G$ has at least $k$ edge-disjoint spanning trees.

\subsection{Rigidity in the Euclidean plane}
A \textit{$d$-dimensional bar-and-joint framework} $(G,p)$ is the combination of an undirected simple graph $G=(V(G),E(G))$ and a map $p: V(G)\rightarrow \mathbb{R}^{d}$ that  assigns a point in $\mathbb{R}^{d}$ to each vertex of $G$. Let $\|\cdot\|$ denote the Euclidean norm in $\mathbb{R}^d$. Two frameworks $(G,p)$ and $(G,q)$ are said to be \textit{equivalent} (resp., \textit{congruent}) if $\|p(u)-p(v)\|=\|q(u)-q(v)\|$ holds for all $uv\in E(G)$ (resp., for all $u,v\in V(G)$).  A framework $(G,p)$ is \textit{generic} if the coordinates of its points are algebraically independent over $\mathbb{Q}$. The framework $(G,p)$ is \textit{rigid} in $\mathbb{R}^{d}$ if there exists an $\varepsilon>0$ such that every framework $(G,q)$ equivalent to $(G,p)$ satisfying $\|p(u)-q(u)\|<\varepsilon$ for all $u\in V(G)$ is actually congruent to $(G,p)$. A generic framework $(G,p)$ is rigid in $\mathbb{R}^d$ if and only if every generic framework of $G$ is rigid in $\mathbb{R}^d$. Hence, generic rigidity can be considered as a property of the underlying graph. We say that a graph $G$ is  \textit{rigid} in $\mathbb{R}^d$ if every/some generic framework of $G$ is rigid in $\mathbb{R}^d$, and it is \textit{redundantly rigid} in $\mathbb{R}^d$ if $G-e$ is rigid in $\mathbb{R}^d$ for every $e\in E(G)$.

A framework $(G, p)$ is \textit{globally rigid} in $\mathbb{R}^d$  if every framework that is equivalent to $(G, p)$ is congruent to $(G, p)$. It was proved in \cite{GortlerThurstonHealey10} that if there exists a generic framework $(G,p)$ in $\mathbb{R}^d$ that is globally rigid, then any other generic framework $(G,q)$ in $\mathbb{R}^d$ is also globally rigid. Thus we say that $G$ is {\it globally rigid} in $\mathbb{R}^d$ if there exists a globally rigid generic framework $(G,p)$ in $\mathbb{R}^d$. By a combination of the results in \cite{Conn05, JaJo05}, a graph $G$ is globally rigid in $\mathbb{R}^2$ if and only if $G$ is 3-connected and redundantly rigid, or $G$ is a complete graph on at most three vertices.

In this paper, we consider rigidity in the Euclidean plane $\mathbb{R}^2$ only. Rigidity in $\mathbb{R}^2$ has been well studied. For a subset $X\subseteq V(G)$, let $G[X]$ be the subgraph of $G$ induced by $X$ and $E(X)$ denote the edge set of $G[X]$. A graph $G$ is {\it sparse} if $|E(X)|\le 2|X|-3$ for every $X\subseteq V(G)$ with $|X|\ge 2$. If in addition $|E(G)|=2|V(G)|-3$, then $G$ is called a {\it Laman graph}. A graph $G$ is {\it rigid} in $\mathbb{R}^2$ if and only if $G$ contains a spanning Laman subgraph. This characterization was first discovered by Pollaczek-Geiringer \cite{Poll1927} and rediscovered by Laman~\cite{Lama70}, and thus is also called the {\it Geiringer-Laman condition}. Lov\'{a}sz and Yemini \cite{LoYe82} gave a new characterization of rigid graphs and showed that every $6$-connected graph is globally rigid in $\mathbb{R}^2$. This result was improved by \cite{JaJo09} using an idea of mixed connectivity and later by \cite{GMRWY21} using a relaxation of connectivity called {\it essential connectivity}.

Cioab\u{a}, Dewar and Gu~\cite{CDG21} studied spectral conditions for rigidity and global rigidity in $\mathbb{R}^2$ via algebraic connectivity. One of the results implies that for a graph $G$ with minimum degree $\delta \ge 6k$, if $\lambda_2(G) < \delta -2 -\frac{2k-1}{\delta -1}$, then $G$ contains at least $k$ edge-disjoint spanning rigid subgraphs. For biregular bipartite graphs, we prove the following different result.

\begin{theorem}\label{thm:rigid}
Let $G$ be an $(X,Y,a,b)$-graph with $a,b\ge 6k$. If 
\begin{eqnarray*}
\lambda_2\le \sqrt{ab} -\frac{3k+\max\{a,b\}}{\sqrt{\lceil\frac{a-1}{2}\rceil\lceil\frac{b-1}{2}\rceil}},
\end{eqnarray*}
then $G$ has at least $k$ edge-disjoint spanning rigid subgraphs.
\end{theorem}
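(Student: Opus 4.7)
The plan is to reduce the existence of $k$ edge-disjoint spanning rigid subgraphs in $G$ to a suitable edge-boundary / partition condition, and then verify that condition with the bipartite expander mixing lemma (Theorem~\ref{bieml}). The structural input I would invoke is a combinatorial theorem (in the spirit of Jord\'an's and Gu's work on connectivity conditions for spanning rigid subgraphs, and analogous to the Nash-Williams--Tutte criterion underlying Theorem~\ref{thm:stp}) which asserts that a graph $G$ of minimum degree at least $6k$ contains $k$ edge-disjoint spanning rigid subgraphs whenever its edge boundaries $e_G(S,V(G)\setminus S)$ are uniformly bounded below by a quantity of the order of $3k$ for every nontrivial subset $S$. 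The hypothesis $a,b\ge 6k$ gives the minimum-degree side condition, so what remains is purely an edge-cut estimate.

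For an arbitrary subset $S\subseteq V(G)$ with $2\le |S|\le |V(G)|-2$, I would split it as $S=A\cup B$ with $A=S\cap X$ and $B=S\cap Y$, so that $e_G(S,V(G)\setminus S)=e(A,Y\setminus B)+e(X\setminus A,B)$. Applying Theorem~\ref{bieml} to each pair and using the identity $\sqrt{ab}/\sqrt{|X||Y|}=a/|Y|=b/|X|$ (valid because $a|X|=b|Y|$), the main terms telescope into
\[
a|A|+b|B|-\frac{2b|A||B|}{|X|},
\]
while the error term is at most $\lambda_2$ times a sum of two square roots depending on $|A|,|B|,|X|,|Y|$. A standard optimization over $A,B$ shows that the resulting lower bound on the cut is minimized when $|A|$ and $|B|$ sit in ranges of order $(b-1)/2$ and $(a-1)/2$ respectively, which is precisely the origin of the ceiling expressions $\lceil(a-1)/2\rceil$ and $\lceil(b-1)/2\rceil$ appearing in the hypothesis. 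The additive $\max\{a,b\}$ inside the numerator absorbs the degenerate boundary cases $|A|\in\{0,1\}$ or $|B|\in\{0,1\}$, where a single opposite-side vertex contributes a full neighborhood of size $a$ or $b$ and the expander estimate is weakest; here $a,b\ge 6k$ is used to push such small cuts above $3k$.

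The main obstacle lies in the structural reduction rather than in the spectral step. Rigidity in $\mathbb{R}^2$ does not admit a clean Nash-Williams--Tutte style partition characterization, and converting a uniform edge-cut lower bound into $k$ edge-disjoint spanning Laman subgraphs ultimately relies on a matroid-union argument applied to the two-dimensional generic rigidity matroid, together with a careful treatment of tight sets (those $W\subseteq V(G)$ with $|E_G(W)|=2|W|-3$). Once this combinatorial reduction is in hand, the spectral part is a careful but routine calculation with Theorem~\ref{bieml} that closely parallels the corresponding step in the proofs of Theorems~\ref{thm:edge-conn}, \ref{thm:conn}, and \ref{thm:stp}.
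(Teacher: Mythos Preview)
There is a genuine gap in the structural reduction. The criterion you invoke---``a graph of minimum degree at least $6k$ has $k$ edge-disjoint spanning rigid subgraphs whenever every nontrivial edge cut has size $\Omega(3k)$''---is not a theorem. High edge-connectivity alone does not force rigidity in $\mathbb{R}^2$; the known sufficient conditions (Lov\'asz--Yemini, Jackson--Jord\'an) are \emph{vertex}-connectivity conditions, precisely because rigidity requires $2$-connectivity and hence is sensitive to removing vertices, not just edges. So a single-cut estimate of the form $e(S,V\setminus S)\ge c$ cannot close the argument, no matter how sharp the spectral step is.

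The paper instead uses Gu's packing criterion (Lemma~\ref{lem::3.1}): $G$ has $k$ edge-disjoint spanning rigid subgraphs if, for every $Z\subset V(G)$ and every partition $\pi$ of $V(G-Z)$ with $n_0$ trivial and $n_0'$ nontrivial parts, one has $e_{G-Z}(\pi)\ge k(3-|Z|)n_0'+2kn_0-3k-n_Z(\pi)$. The proof then argues by contradiction that any violating pair $(Z,\pi)$ must satisfy $|Z|\le 2$ and $n_0'\ge 2$, and applies the bipartite expander mixing lemma to each nontrivial part with small boundary. This is where the two features of the hypothesis that puzzled you actually come from: the term $\max\{a,b\}$ arises because passing from $e(V_i,V-V_i)$ to $e(V_i,V^*-V_i)$ in $G-Z$ costs at most $\frac{|Z|}{2}\max\{a,b\}$ in total (a degree bound on the removed vertices), not from degenerate $|A|\in\{0,1\}$ cases; and the ceilings $\lceil (a-1)/2\rceil,\lceil (b-1)/2\rceil$ are exactly $\lceil (a+1-|Z|)/2\rceil,\lceil (b+1-|Z|)/2\rceil$ at the worst case $|Z|=2$. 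Your spectral calculation is in the right spirit, but it has to be run inside this $(Z,\pi)$ framework rather than on a single bipartition.
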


It is well known that any rigid graph with at least 3 vertices is 2-connected. Thus Theorem~\ref{thm:rigid} implies a sufficient condition for a biregular bipartite graph containing $k$ edge-disjoint spanning 2-connected subgraphs, and so it can be considered as an extension of Theorem~\ref{thm:stp}.
\begin{corollary}
Let $G$ be an $(a,b)$-biregular bipartite graph with $a,b\ge 6k$. If 
$$\lambda_2\le \sqrt{ab} -\frac{3k+\max\{a,b\}}{\sqrt{\lceil\frac{a-1}{2}\rceil\lceil\frac{b-1}{2}\rceil}},$$
then $G$ has at least $k$ edge-disjoint spanning 2-connected subgraphs.
\end{corollary}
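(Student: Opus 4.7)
The plan is to derive the corollary as an immediate consequence of Theorem~\ref{thm:rigid} together with the structural fact, mentioned in the paragraph preceding the corollary, that every rigid graph in $\mathbb{R}^2$ on at least three vertices is $2$-connected. The hypotheses of the corollary are identical to those of Theorem~\ref{thm:rigid}, so applying that theorem directly to $G$ produces $k$ pairwise edge-disjoint spanning rigid subgraphs $H_1,\ldots,H_k$ of $G$.

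Before invoking the rigidity-to-$2$-connectedness implication, I would do the one routine verification that each $H_i$ has at least three vertices. Because $G$ is $(a,b)$-biregular with $a,b\ge 6k\ge 6$, every vertex in $Y$ has $b$ neighbors, all contained in $X$, which forces $|X|\ge b$; symmetrically $|Y|\ge a$. Hence $|V(G)|=|X|+|Y|\ge a+b\ge 12$, and since each $H_i$ is spanning it inherits this vertex set. Thus every $H_i$ is a rigid graph on at least three vertices, and so it is $2$-connected.

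Putting the two ingredients together, $H_1,\ldots,H_k$ are $k$ pairwise edge-disjoint spanning $2$-connected subgraphs of $G$, which is exactly what the corollary asserts. There is no genuine obstacle here: the substantial work is already carried out in Theorem~\ref{thm:rigid}, and the only supporting observations are the trivial degree inequalities $|X|\ge b$ and $|Y|\ge a$ and the well-known fact that a rigid graph on at least three vertices cannot contain a cut vertex.
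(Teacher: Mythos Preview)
Your proposal is correct and follows exactly the paper's approach: the corollary is stated as an immediate consequence of Theorem~\ref{thm:rigid} together with the well-known fact that any rigid graph on at least three vertices is $2$-connected. Your additional verification that $|V(G)|\ge 3$ is a harmless routine check that the paper leaves implicit.
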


When $k=1$, we obtain a sufficient condition for rigid graphs.
\begin{corollary}
Let $G$ be an $(a,b)$-biregular bipartite graph with $a,b\ge 6$. If 
$$\lambda_2\le \sqrt{ab} -\frac{3+\max\{a,b\}}{\sqrt{\lceil\frac{a-1}{2}\rceil\lceil\frac{b-1}{2}\rceil}},$$
then $G$ is rigid.
\end{corollary}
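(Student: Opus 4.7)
The plan is to derive this as the immediate $k=1$ specialization of Theorem~\ref{thm:rigid}. Substituting $k=1$ into that theorem, the degree hypothesis $a,b\ge 6k$ becomes $a,b\ge 6$, and the spectral bound
$$\lambda_2\le \sqrt{ab}-\frac{3k+\max\{a,b\}}{\sqrt{\lceil\frac{a-1}{2}\rceil\lceil\frac{b-1}{2}\rceil}}$$
reduces to exactly the inequality assumed in the corollary. Hence Theorem~\ref{thm:rigid} applies and produces at least one spanning rigid subgraph $H\subseteq G$.

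It then remains to upgrade ``$G$ contains a spanning rigid subgraph'' to ``$G$ itself is rigid in $\mathbb{R}^2$''. For this I would invoke the Geiringer--Laman characterization recalled in the introduction: a graph is rigid in $\mathbb{R}^2$ if and only if it contains a spanning Laman subgraph. Applying the ``only if'' direction to $H$ supplies a spanning Laman subgraph $L\subseteq H$. Since $V(L)=V(H)=V(G)$ and $E(L)\subseteq E(H)\subseteq E(G)$, the graph $L$ is also a spanning Laman subgraph of $G$, so the ``if'' direction of Geiringer--Laman yields that $G$ is rigid in $\mathbb{R}^2$.

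Because all of the analytic content---the expander mixing estimate, the eigenvalue manipulations, and the packing argument that underlies Theorem~\ref{thm:rigid}---has already been absorbed into the parent theorem, no further calculation is required. The only conceptual point one must verify is the monotonicity of rigidity under edge addition, i.e.\ that a supergraph of a rigid graph on the same vertex set is rigid, and this is what the short Geiringer--Laman argument above establishes. Consequently there is no real obstacle: the corollary is essentially a one-line invocation of Theorem~\ref{thm:rigid} together with the standard fact that rigidity is preserved under adding edges.
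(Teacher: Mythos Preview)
Your proposal is correct and matches the paper's approach: the corollary is stated immediately after Theorem~\ref{thm:rigid} with the remark ``When $k=1$, we obtain a sufficient condition for rigid graphs,'' so the intended proof is precisely the $k=1$ specialization you describe. Your added justification via the Geiringer--Laman characterization that a spanning rigid subgraph forces $G$ itself to be rigid is a welcome clarification the paper leaves implicit.
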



We also discover a similar condition for global rigidity.
\begin{theorem}\label{thm:globally-rigid}
Let $G$ be an $(X,Y,a,b)$-graph with $a,b\ge 6$. If 
\begin{eqnarray*}
\lambda_2\leq\sqrt{ab} -\frac{3+\max\{a,b\}}{\sqrt{\lceil\frac{a-2}{2}\rceil\lceil\frac{b-2}{2}\rceil}},
\end{eqnarray*}
then $G$ is globally rigid.
\end{theorem}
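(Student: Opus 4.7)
The plan is to invoke the Hendrickson--Jackson--Jord\'an characterization cited in Section~1: for $|V(G)|\ge 4$, a graph $G$ is globally rigid in $\mathbb{R}^2$ if and only if $G$ is both $3$-connected and redundantly rigid. Under $a,b\ge 6$ we have $|V(G)|\ge 7$, so this applies. For the $3$-connectivity part I would derive it as a direct consequence of Theorem~\ref{thm:conn} with $k=3$: a short algebraic check under $a,b\ge 6$ shows
\[
\frac{3+\max\{a,b\}}{\sqrt{\lceil (a-2)/2\rceil\lceil (b-2)/2\rceil}} \ \ge\ \frac{\max\{a,b\}}{\sqrt{ab-2\max\{a,b\}}},
\]
so the spectral hypothesis of Theorem~\ref{thm:globally-rigid} is strictly stronger than what is needed for $\kappa(G)\ge 3$.

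For redundant rigidity, the plan is to show that $G-e$ is rigid in $\mathbb{R}^2$ for every edge $e\in E(G)$. Fixing $e=xy$ with $x\in X$ and $y\in Y$, note that $G-e$ is no longer biregular, so Theorem~\ref{thm:rigid} cannot be invoked directly. Instead I would mimic the partition-based proof of Theorem~\ref{thm:rigid} with $k=1$, applied to $G-e$. Concretely, if $G-e$ fails to be rigid, then the Nash--Williams--Tutte matroid partition theorem applied to the $2$-dimensional generic rigidity matroid $\mathcal{R}_2$ produces a partition $\{V_1,\ldots,V_t\}$ of $V(G)$ whose cross-edge count in $G-e$ is too small. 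Since $G$ and $G-e$ differ by a single edge, the cross-edge count in $G$ exceeds that in $G-e$ by at most $1$. Applying the bipartite expander mixing lemma (Theorem~\ref{bieml}) to subsets of $X$ and $Y$ built from this partition yields a lower bound on the cross-edges in $G$ in terms of $\lambda_2$, and combining the two bounds produces an inequality contradicting the spectral hypothesis. The expressions $\lceil(a-2)/2\rceil$ and $\lceil(b-2)/2\rceil$ appear because deleting $e$ drops the degrees at $x$ and $y$ by one, and the standard ``majority-half'' trick applied to the degrees $a-1$ and $b-1$ produces these ceilings.

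The hard part will be executing the redundant-rigidity step cleanly. A tempting shortcut, namely showing that $G$ contains two edge-disjoint spanning rigid subgraphs (which forces redundant rigidity), would need the hypothesis of Theorem~\ref{thm:rigid} with $k=2$, requiring $a,b\ge 12$ and a strictly stronger spectral bound---both incompatible with the hypotheses here---so the proof genuinely must handle the single-edge deletion directly. The delicate points will be tuning the choice of subsets $A\subseteq X$ and $B\subseteq Y$ in the expander mixing lemma so that the numerator $3+\max\{a,b\}$ pairs with the denominator $\sqrt{\lceil(a-2)/2\rceil\lceil(b-2)/2\rceil}$, and then verifying that the argument is uniform over all edges $e$ and over all candidate bad partitions of $V(G)$, including the extreme cases where some $V_i$ are very small (singleton or pair) or very large.
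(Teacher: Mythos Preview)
Your high-level plan coincides with the paper's: establish $3$-connectivity from Theorem~\ref{thm:conn} with $k=3$, then prove redundant rigidity by rerunning the argument behind Theorem~\ref{thm:rigid} (with $k=1$) on $G-f$ for an arbitrary edge $f$. The $3$-connectivity reduction is exactly how the paper proceeds.

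The redundant-rigidity sketch, however, has a structural gap. Rigidity in $\mathbb{R}^2$ is not governed by a plain Nash--Williams--Tutte partition inequality on $V(G)$; the tool the paper uses (both here and in Theorem~\ref{thm:rigid}) is Lemma~\ref{lem::3.1}, which supplies a subset $Z\subset V(G)$ \emph{together with} a partition $\pi$ of $V(G-Z)$ into $n_0$ trivial and $n_0'$ nontrivial parts, and the inequality one must violate (after absorbing the deleted edge) is $e_{G-Z}(\pi)\le (3-|Z|)n_0'+2n_0-3-n_Z(\pi)$. One first proves $|Z|\le 2$ and $n_0'\ge 2$, and then obtains $2|S_i|\ge b-|Z|$ and $2|T_i|\ge a-|Z|$ for the low-boundary nontrivial parts; substituting $|Z|\le 2$ is what yields $\lceil(a-2)/2\rceil$ and $\lceil(b-2)/2\rceil$. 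Your attribution of these ceilings to the degree drop at the endpoints of $f$ is incorrect: the majority-half trick applied to degrees $a-1,b-1$ would give $\lceil a/2\rceil,\lceil b/2\rceil$, not the stated quantities. The deleted edge enters only as a global ``$+1$'' that loosens the Lemma~\ref{lem::3.1} bound from $\cdots-4$ to $\cdots-3$ (equivalently, shifts the boundary threshold defining $q$ by one). Likewise, the term $\max\{a,b\}$ in the numerator arises from bounding the edges incident to $Z$ via $\sum_{v\in Z}d_G(v)\le |Z|\max\{a,b\}$ when passing from $e(V_i,V-V_i)$ to $e(V_i,V^*-V_i)$. Without the $Z$-machinery of Lemma~\ref{lem::3.1}, a partition-only outline cannot reproduce either the correct denominator or the $\max\{a,b\}$ term, so the execution as you describe it would not close.
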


The rigidity of regular Ramanujan graphs and regular bipartite Ramanujan graphs was studied in~\cite{CDGG23}. Recall that an $(a,b)$-biregular bipartite graph is Ramanujan if $\lambda_2\le \sqrt{a-1} +\sqrt{b-1}$. Our results imply the rigidity of $(a,b)$-biregular bipartite Ramanujan graphs for given values of $a,b$.

\section{The proofs of main results}
In this section, we will prove the theorems. One of the main tools is the bipartite expander mixing lemma. Let $V(G)=V$ in the proofs below.

\subsection{The proofs of Theorems~\ref{thm:edge-conn}, \ref{thm:conn} and \ref{thm:stp}}

\begin{proof}[\bf The proof of Theorem~\ref{thm:edge-conn}]
Suppose to the contrary that there is a nonempty proper subset $A$ of $V(G)$ such that $e(A, V-A)\le k-1$. First, notice that $A\cap X\neq \emptyset$, for otherwise, $A\subseteq Y$ and so $e(A, V-A)= b |A| \ge k$, a contradiction. Similarly $A\cap Y\neq \emptyset$. Let $A\cap X = S$, $A\cap Y = T$, $X-S =S'$ and $Y-T = T'$. Then $(V-A)\cap X = S'$ and $(V-A)\cap Y = T'$. It follows that $e(A, V-A) = e(S, T') + e(S', T)$.

We claim that $2|S|\ge b+1$. Otherwise, $2|S|\le b$, then $e(A, V-A) = a|S| + b|T|- 2e(S, T)\ge  a|S| + b|T| - 2|S||T| \ge   a|S|\ge k$, which also leads to a contradiction. Similarly, we have $2|S'|\ge b+1$, $2|T|\ge a+1$ and $2|T'|\ge a+1$. Let $|X| =x$, $|Y| =y$, $|S| =s$ and $|T| =t$. By Theorem~\ref{bieml},
\begin{equation*}
\begin{aligned}
e(S, T') 
&\ge \frac{\sqrt{ab}}{\sqrt{|X| |Y|}} |S| |T'| - \lambda_2 \sqrt{|S| |T'| \left(1-\frac{|S|}{|X|}\right)\left(1-\frac{|T'|}{|Y|}\right)}\\
&= \frac{\sqrt{ab}}{\sqrt{xy}}\cdot s(y-t) - \lambda_2 \sqrt{s(y-t) \left(1-\frac{s}{x}\right)\left(1-\frac{y-t}{y}\right)}
\\&= \frac{\sqrt{ab}}{\sqrt{xy}}\cdot s(y-t) - \frac{\lambda_2}{\sqrt{xy}} \sqrt{s(y-t)(x-s)t}.
\end{aligned}
\end{equation*}
Similarly,
\begin{equation*}
\begin{aligned}
e(S', T) 
&\ge \frac{\sqrt{ab}}{\sqrt{|X| |Y|}} |S'| |T| - \lambda_2 \sqrt{|S'| |T| \left(1-\frac{|S'|}{|X|}\right)\left(1-\frac{|T|}{|Y|}\right)}
\\&= \frac{\sqrt{ab}}{\sqrt{xy}}\cdot (x-s)t - \frac{\lambda_2}{\sqrt{xy}} \sqrt{(x-s)ts(y-t)}.
\end{aligned}
\end{equation*}
Thus
\begin{equation*}
\begin{aligned}
e(A, V-A) 
&= e(S, T') + e(S', T)
\\&\ge \frac{\sqrt{ab}}{\sqrt{xy}}\cdot \left( s(y-t) +  (x-s)t \right) - \frac{2\lambda_2}{\sqrt{xy}} \sqrt{s(x-s)t(y-t)}.
\\&\ge \frac{\sqrt{ab}}{\sqrt{xy}}\cdot 2\sqrt{s(x-s)t(y-t)} - \frac{2\lambda_2}{\sqrt{xy}} \sqrt{s(x-s)t(y-t)}.
\\&= \frac{\sqrt{ab}-\lambda_2}{\sqrt{xy}}\cdot 2\sqrt{s(x-s)t(y-t)}.
\end{aligned}
\end{equation*}
It is not hard to see that $s(x-s)$ is minimized at $s=\lceil\frac{b+1}{2}\rceil$ or $s=x-\lceil\frac{b+1}{2}\rceil$, and $t(y-t)$ is minimized at $t=\lceil\frac{a+1}{2}\rceil$ or $t=y-\lceil\frac{a+1}{2}\rceil$, and hence
\begin{equation*}
\begin{aligned}
e(A, V-A) 
&\ge \frac{2\sqrt{ab}-2\lambda_2}{\sqrt{xy}}\cdot \sqrt{\Big\lceil\frac{b+1}{2}\Big\rceil\Big(x-\Big\lceil\frac{b+1}{2}\Big\rceil\Big)\Big\lceil\frac{a+1}{2}\Big\rceil\Big(y-\Big\lceil\frac{a+1}{2}\Big\rceil\Big)}.
\end{aligned}
\end{equation*}
By (\ref{kappa1}), we have $e(A, V-A) > k-1$, a contradiction. Therefore $\kappa'(G)\ge k$.

To prove (\ref{kappa2}) is also sufficient, notice that $x = |S| + |S'| \ge 2\lceil\frac{b+1}{2}\rceil$ and so $1 -\frac{\lceil\frac{b+1}{2}\rceil}{x}\ge\frac{1}{2}$. 
Similarly, $1 -\frac{\lceil\frac{a+1}{2}\rceil}{y}\ge\frac{1}{2}$. Thus
\begin{equation*}
\begin{aligned}
e(A, V-A) 
&\ge \frac{2\sqrt{ab}-2\lambda_2}{\sqrt{xy}}\cdot \sqrt{\Big\lceil\frac{b+1}{2}\Big\rceil\Big(x-\Big\lceil\frac{b+1}{2}\Big\rceil\Big)\Big\lceil\frac{a+1}{2}\Big\rceil\Big(y-\Big\lceil\frac{a+1}{2}\Big\rceil\Big)}
\\&= 2(\sqrt{ab}-\lambda_2)\cdot \sqrt{\Big\lceil\frac{b+1}{2}\Big\rceil\Big(1 -\frac{\lceil\frac{b+1}{2}\rceil}{x}\Big)\Big\lceil\frac{a+1}{2}\Big\rceil\Big(1 -\frac{\lceil\frac{a+1}{2}\rceil}{y}\Big)}
\\&\ge (\sqrt{ab}-\lambda_2)\cdot \sqrt{\Big\lceil\frac{a+1}{2}\Big\rceil\Big\lceil\frac{b+1}{2}\Big\rceil}.
\end{aligned}
\end{equation*}
By (\ref{kappa2}), we have $e(A, V-A) > k-1$, a contradiction. Therefore $\kappa'(G)\ge k$.
\end{proof}

\begin{proof}[\bf The proof of Theorem~\ref{thm:conn}]
Suppose to the contrary that $\kappa(G)\le k-1$. Let $W$ be a vertex cut with $|W|\le k-1$ such that $|W\cap X| =k_1$, $|W\cap Y| =k_2$ and $k_1 +k_2 \le k-1$. Suppose that $A_1, A_2,\ldots, A_p$ are the components of $G-W$, where $p\geq 2$. Without loss of generality, we assume that $e(A_1,W)\leq e(A_2,W)\leq\cdots\leq e(A_p,W)$. On the one hand, we have $e(A_1,V-A_1)=e(A_1,W)\le \frac{k_1a +k_2b}{p}\leq \frac{k_1a +k_2b}{2}$.  On the other hand, let $W\cap X=W_1$, $W\cap Y=W_2$, $A_1\cap X=S$, $A_1\cap Y=T$, $X-(S\cup W_1)=S'$ and $Y-(T\cup W_2)=T'$. It is easy to verify that $|S|\geq b-k_1$, $|T|\geq a-k_2$, $|S'|\geq b-k_1$ and $|T'|\geq a-k_2$. Let $|X|=x$, $|Y|=y$, $|S|=s$ and $|T|=t$. Then, by Theorem~\ref{bieml}, we have
\begin{equation*}
\begin{aligned}
e(S, Y-T) 
&\ge \frac{\sqrt{ab}}{\sqrt{|X| |Y|}} |S| |Y-T| - \lambda_2 \sqrt{|S| |Y-T| \left(1-\frac{|S|}{|X|}\right)\left(1-\frac{|Y-T|}{|Y|}\right)}
\\&= \frac{\sqrt{ab}}{\sqrt{xy}}\cdot s(y-t) - \lambda_2 \sqrt{s(y-t) \left(1-\frac{s}{x}\right)\left(1-\frac{y-t}{y}\right)}
\\&= \frac{\sqrt{ab}}{\sqrt{xy}}\cdot s(y-t) - \frac{\lambda_2}{\sqrt{xy}} \sqrt{s(y-t)(x-s)t}.
\end{aligned}
\end{equation*}
Similarly,
\begin{equation*}
\begin{aligned}
e(X-S, T) 
&\ge \frac{\sqrt{ab}}{\sqrt{|X| |Y|}} |X-S| |T| - \lambda_2 \sqrt{|X-S| |T| \left(1-\frac{|X-S|}{|X|}\right)\left(1-\frac{|T|}{|Y|}\right)}
\\&= \frac{\sqrt{ab}}{\sqrt{xy}}\cdot (x-s)t - \frac{\lambda_2}{\sqrt{xy}} \sqrt{(x-s)ts(y-t)}.
\end{aligned}
\end{equation*}
Thus
\begin{equation*}
\begin{aligned}
e(A_1, V-A_1) 
&= e(S, Y-T) + e(X-S, T)
\\&\ge \frac{\sqrt{ab}}{\sqrt{xy}}\cdot \left( s(y-t) +  (x-s)t \right) - \frac{2\lambda_2}{\sqrt{xy}} \sqrt{s(x-s)t(y-t)}.
\\&\ge \frac{\sqrt{ab}}{\sqrt{xy}}\cdot 2\sqrt{s(x-s)t(y-t)} - \frac{2\lambda_2}{\sqrt{xy}} \sqrt{s(x-s)t(y-t)}.
\\&= \frac{\sqrt{ab}-\lambda_2}{\sqrt{xy}}\cdot 2\sqrt{s(x-s)t(y-t)}.
\end{aligned}
\end{equation*}
Observe that $s(x-s)$ is minimized at $s=b-k_1$, and $t(y-t)$ is minimized at $t=a-k_2$. Thus $$e(A_1,V-A_1)\geq \frac{2(\sqrt{ab}-\lambda_2)}{\sqrt{xy}}\cdot \sqrt{(b-k_1)(x-(b-k_1))(a-k_2)(y-(a-k_2))}.$$
Since $k=k_1+k_2>0$, at least one of $k_1$ and $k_2$ is more than 0. Assume that $k_1>0$ (the case $k_2>0$ is similar). Then $x=|S|+|S'|+k_1> 2(b-k_1)$ and $y=|T|+|T'|+k_2\geq 2(a-k_2)$, and hence $1-\frac{b-k_1}{x}> \frac{1}{2}$ and $1-\frac{a-k_2}{y}\geq \frac{1}{2}$. 
Therefore,
\begin{equation}
\begin{aligned}\label{eqn:evcon}
e(A_1,V-A_1)
&\geq  \frac{2(\sqrt{ab}-\lambda_2)}{\sqrt{xy}}\cdot \sqrt{(b-k_1)(x-(b-k_1))(a-k_2)(y-(a-k_2))}\\
&= 2(\sqrt{ab}-\lambda_2)\cdot\sqrt{(b-k_1)\Big(1-\frac{b-k_1}{x}\Big)(a-k_2)\Big(1-\frac{a-k_2}{y}\Big)}\\
&> (\sqrt{ab}-\lambda_2)\cdot\sqrt{(a-k_2)(b-k_1)}.
\end{aligned}
\end{equation}
Notice that $k_1a +k_2b\le k_1\max\{a,b\} +k_2\max\{a,b\} \leq (k-1)\max\{a,b\}$, and so $(a-k_2)(b-k_1) = ab - (k_1a +k_2b) +k_1k_2\ge ab -(k-1)\max\{a,b\}$. By \eqref{eqn:kcon} and \eqref{eqn:evcon},
\begin{eqnarray*}
e(A_1,V-A_1)> (\sqrt{ab}-\lambda_2)\cdot\sqrt{ab -(k-1)\max\{a,b\}}\ge \frac{(k-1)\max\{a,b\}}{2}\ge \frac{k_1a +k_2b}{2},
\end{eqnarray*}
a contradiction. Thus $\kappa(G)\ge k$.
\end{proof}

To prove Theorem~\ref{thm:stp}, we need the spanning tree packing theorem. Let $(V_1, \ldots, V_t)$ be a sequence of disjoint vertex subsets of $V(G)$ and $e(V_1, \ldots, V_t)$ be the number of edges whose ends lie in different $V_i$'s. The following fundamental theorem on spanning tree packing number was established by Nash-Williams \cite{Nash-Williams} and Tutte \cite{Tutte}, independently.

\begin{theorem}[Nash-Williams~\cite{Nash-Williams} and Tutte~\cite{Tutte}]
\label{Nash-Williams-Tutte}
Let $G$ be a connected graph. Then $\tau(G)\geq k$ if and only if for any partition $(V_1, \ldots, V_t)$ of $V(G)$,
$$e(V_1, \ldots, V_t) \geq k(t-1).$$
\end{theorem}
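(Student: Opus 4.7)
The plan is to prove the two directions separately, with necessity being immediate and sufficiency relying on the matroid union theorem of Nash-Williams and Edmonds.

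For necessity, suppose $G$ contains $k$ edge-disjoint spanning trees $T_1, T_2, \ldots, T_k$, and fix a partition $(V_1, \ldots, V_t)$ of $V(G)$. For each $T_i$, contracting every $V_j$ to a single vertex yields a connected multigraph on $t$ vertices, so $T_i$ must contribute at least $t-1$ edges whose ends lie in different parts. Summing over the $k$ edge-disjoint trees gives $e(V_1, \ldots, V_t) \geq k(t-1)$.

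For sufficiency, I would work inside the cycle matroid $M(G)$, whose rank function is $r(F) = |V(G)| - c(V(G), F)$, where $c(V(G), F)$ denotes the number of connected components of the spanning subgraph $(V(G), F)$. Having $k$ edge-disjoint spanning trees is equivalent to the $k$-fold matroid union of $M(G)$ with itself having rank $k(|V(G)|-1)$, and by the matroid union formula this rank equals
$$\min_{F \subseteq E(G)} \bigl(|E(G) \setminus F| + k \cdot r(F)\bigr).$$
To deduce this identity from the partition hypothesis, take an arbitrary $F \subseteq E(G)$ and let $V_1, \ldots, V_t$ be the connected components of the spanning subgraph $(V(G), F)$. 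Then $r(F) = n - t$, and every edge whose ends lie in different components belongs to $E(G) \setminus F$, so
$$|E(G) \setminus F| + k \cdot r(F) \geq e(V_1, \ldots, V_t) + k(n-t) \geq k(t-1) + k(n-t) = k(n-1),$$
yielding the required lower bound on the rank. The reverse encoding is a direct unpacking: given a partition $(V_1, \ldots, V_t)$, set $F = \bigcup_i E(G[V_i])$ and note that $c(V(G), F) \geq t$, so the matroid union bound specializes to the partition inequality.

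The main obstacle is the matroid union formula itself. I would establish it via an augmenting-path argument in the exchange digraph associated with a partition of an edge set into $k$ forests: starting from a maximum such partition and a candidate witness $F$, one shows that whenever the current size falls below the claimed rank, a modification along an alternating path produces a larger partition; termination then forces equality with the minimum. As an alternative route, one can sidestep matroid theory entirely and follow the original inductive proofs of Nash-Williams and Tutte, which repeatedly swap edges between $k$ edge-disjoint spanning forests along carefully chosen alternating walks to reduce the total component count, with the partition inequality providing the invariant that guarantees progress. Verifying that this swapping process must succeed whenever the partition inequality holds is the technically demanding step.
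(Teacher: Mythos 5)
The paper does not prove this statement at all: Theorem~\ref{Nash-Williams-Tutte} is the classical Nash-Williams--Tutte tree-packing theorem, quoted with citations to the original 1961 papers and used as a black box in the proof of Theorem~\ref{thm:stp}. So there is no in-paper argument to compare against; your proposal must be judged on its own. On that basis it is sound as a plan. The necessity direction (contract each part; each of the $k$ edge-disjoint spanning trees stays connected on $t$ contracted vertices and so contributes at least $t-1$ cross edges) is complete and correct. The sufficiency direction via matroid union is the standard modern derivation: the equivalence between $k$ edge-disjoint spanning trees and the $k$-fold union of the cycle matroid having rank $k(n-1)$ is right (an independent set of that size splits into $k$ forests each of size exactly $n-1$), and your estimate $|E\setminus F| + k\,r(F) \ge e(V_1,\dots,V_t) + k(n-t) \ge k(n-1)$, using the component partition of $(V,F)$, correctly reduces the minimum in the union formula to the partition hypothesis. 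You are right that the entire technical weight then sits on the matroid union rank formula (or, in the alternative route, on the alternating-walk exchange argument of the original proofs); as written this remains a cited or to-be-proved ingredient rather than a finished proof, which is a reasonable stance for a theorem of this stature but should be stated as such. One small point worth making explicit in a final write-up: in the ``reverse encoding'' you only need the direction you actually proved (every $F$ gives a partition), since that alone yields the lower bound $k(n-1)$ on the union rank; the converse specialization is not needed for sufficiency.
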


\begin{proof}[\bf The proof of Theorem~\ref{thm:stp}]
Assume to the contrary that $\tau(G)\leq k-1$. By Theorem~\ref{Nash-Williams-Tutte}, there exists a partition $(V_1, V_2,\cdots, V_t)$ of $V(G)$  such that
\begin{equation}\label{equ::5}
\begin{aligned}
e(V_1, V_2,\cdots, V_t)\leq k(t-1)-1.
\end{aligned}
\end{equation}
Without loss of generality, we assume that $e(V_1, V-V_1)\leq e(V_2, V-V_2)\leq\cdots\leq e(V_t, V-V_t)$. Let $p$ be the largest index such that $e(V_p, V-V_p)\leq 2k-1$. We first claim that $p\ge 2$, for otherwise if $p\le 1$, then $e(V_2, V-V_2)\geq 2k$, and so 
$$e(V_1, V_2,\cdots, V_t)=\frac{1}{2}\sum_{1\leq i\leq t}e(V_i,V-V_i)\geq \frac{1}{2}\sum_{2\leq i\leq t}e(V_i,V-V_i)\geq k(t-1),$$
contradicting (\ref{equ::5}). Thus $p\geq 2$.

We assert that $V_i\cap X\neq \emptyset$ for $1\leq i\leq p$. If not, there exists some $j$ ($1\leq j\leq p$) such that $V_j\subseteq Y$, and thus, $e(V_j, V-V_j)= b|V_j| \ge 2k$, a contradiction. Similarly $V_i\cap Y\neq \emptyset$ for $1\leq i\leq p$. Let $V_i\cap X = S_i$, $V_i\cap Y = T_i$, $X-S_i =S_i'$ and $Y-T_i = T'_i$ where $1\leq i\leq p$. We next assert that $2|S_i|\ge b+1$ for $1\leq i\leq p$. Otherwise if $2|S_i|\le b$ for $1\leq i\leq p$, then 
\begin{equation*}
\begin{aligned}
e(V_i, V-V_i)&=a|S_i| + b|T_i|- 2e(S_i, T_i)\\
&\geq a|S_i| + b|T_i| - 2|S_i||T_i| \\
&\geq a|S_i| + b|T_i| -b|T_i| ~~~(\mbox{since $2|S_i|\le b$})\\
&=a|S_i|\\
&\geq 2k ~~(\mbox{since $a\geq 2k$ and $|S_i|\geq 1$}),
\end{aligned}
\end{equation*}
a contradiction. Similarly, we have $2|S'_i|\ge b+1$, $2|T_i|\ge a+1$ and $2|T'_i|\ge a+1$ for $1\leq i\leq p$.

Let $|X| =x$, $|Y| =y$, $|S_i| =s_i$ and $|T_i| =t_i$ where $1\leq i\leq p$. By Theorem~\ref{bieml}, for $1\leq i\leq p$, we have 
\begin{equation*}
\begin{aligned}
e(S_i, T'_i) 
&\ge \frac{\sqrt{ab}}{\sqrt{|X| |Y|}} |S_i| |T'_i| - \lambda_2 \sqrt{|S_i| |T'_i| \left(1-\frac{|S_i|}{|X|}\right)\left(1-\frac{|T'_i|}{|Y|}\right)}
\\&= \frac{\sqrt{ab}}{\sqrt{xy}}\cdot s_i(y-t_i) - \lambda_2 \sqrt{s_i(y-t_i) \left(1-\frac{s_i}{x}\right)\left(1-\frac{y-t_i}{y}\right)}
\\&= \frac{\sqrt{ab}}{\sqrt{xy}}\cdot s_i(y-t_i) - \frac{\lambda_2}{\sqrt{xy}} \sqrt{s_it_i(y-t_i)(x-s_i)}.
\end{aligned}
\end{equation*}
and
\begin{equation*}
\begin{aligned}
e(S'_i, T_i) 
&\ge \frac{\sqrt{ab}}{\sqrt{|X| |Y|}} |S'_i| |T_i| - \lambda_2 \sqrt{|S'_i| |T_i| \left(1-\frac{|S'_i|}{|X|}\right)\left(1-\frac{|T_i|}{|Y|}\right)}
\\&= \frac{\sqrt{ab}}{\sqrt{xy}}\cdot (x-s_i)t_i - \frac{\lambda_2}{\sqrt{xy}} \sqrt{s_it_i(x-s_i)(y-t_i)}.
\end{aligned}
\end{equation*}
Thus, for $1\leq i\leq p$,
\begin{equation}
\begin{aligned}\label{eq:evi}
e(V_i, V-V_i) 
 &= e(S_i, T'_i) + e(S'_i, T_i)\\
&\ge \frac{\sqrt{ab}}{\sqrt{xy}}\cdot \left(s_i(y-t_i) +  (x-s_i)t_i\right) - \frac{2\lambda_2}{\sqrt{xy}} \sqrt{s_i(x-s_i)t_i(y-t_i)}\\
&\ge \frac{\sqrt{ab}}{\sqrt{xy}}\cdot 2\sqrt{s_i(x-s_i)t_i(y-t_i)} - \frac{2\lambda_2}{\sqrt{xy}} \sqrt{s_i(x-s_i)t_i(y-t_i)}\\
&= \frac{2(\sqrt{ab}-\lambda_2)}{\sqrt{xy}}\cdot \sqrt{s_i(x-s_i)t_i(y-t_i)}.
\end{aligned}
\end{equation}
Note that $s_i(x-s_i)$ attains the minimum value at $s_i=\lceil\frac{b+1}{2}\rceil$ and $t_i(y-t_i)$ attains the minimum value at $t_i=\lceil\frac{a+1}{2}\rceil$ for $1\leq i\leq p$. Thus
\begin{equation*}
\begin{aligned}
e(V_1, V_2,\cdots, V_t) 
&=\frac{1}{2}\Big(\sum_{1\leq i\leq p}e(V_i, V-V_i) +\sum_{p+1\leq i\leq t}e(V_i, V-V_i)\Big)\\
&\ge\frac{p(\sqrt{ab}-\lambda_2)}{\sqrt{xy}}\cdot \sqrt{s_i(x-s_i)t_i(y-t_i)}+k(t-p)\\
&\ge \frac{p(\sqrt{ab}-\lambda_2)}{\sqrt{xy}}\cdot \sqrt{\Big\lceil\frac{b+1}{2}\Big\rceil\Big(x-\Big\lceil\frac{b+1}{2}\Big\rceil\Big)\Big\lceil\frac{a+1}{2}\Big\rceil\Big(y-\Big\lceil\frac{a+1}{2}\Big\rceil\Big)}+k(t-p).
\end{aligned}
\end{equation*}
Notice that $x\ge \sum_{1\le i\le p}|S_i| \ge p\lceil\frac{b+1}{2}\rceil$ and $y\ge \sum_{1\le i\le p}|T_i|\ge p\lceil\frac{a+1}{2}\rceil$. Then $1 -\frac{\lceil\frac{b+1}{2}\rceil}{x}\ge\frac{p-1}{p}$ and $1 -\frac{\lceil\frac{a+1}{2}\rceil}{y}\ge\frac{p-1}{p}$, and hence

\begin{equation*}
\begin{aligned}
e(V_1, V_2,\cdots, V_t) &\ge \frac{p(\sqrt{ab}-\lambda_2)}{\sqrt{xy}}\cdot \sqrt{\Big\lceil\frac{b+1}{2}\Big\rceil\Big(x-\Big\lceil\frac{b+1}{2}\Big\rceil\Big)\Big\lceil\frac{a+1}{2}\Big\rceil\Big(y-\Big\lceil\frac{a+1}{2}\Big\rceil\Big)}+k(t-p)\\
&= p(\sqrt{ab}-\lambda_2)\cdot \sqrt{\Big\lceil\frac{b+1}{2}\Big\rceil\Big(1-\frac{\lceil\frac{b+1}{2}\rceil}{x}\Big)\Big\lceil\frac{a+1}{2}\Big\rceil\Big(1-\frac{\lceil\frac{a+1}{2}\rceil}{y}\Big)}+k(t-p)\\
&\ge p(\sqrt{ab}-\lambda_2)\cdot \frac{p-1}{p}\cdot \sqrt{\Big\lceil\frac{b+1}{2}\Big\rceil\Big\lceil\frac{a+1}{2}\Big\rceil}+k(t-p)\\
&= (p-1)(\sqrt{ab}-\lambda_2)\cdot \sqrt{\Big\lceil\frac{b+1}{2}\Big\rceil\Big\lceil\frac{a+1}{2}\Big\rceil}+k(t-p)\\
&\ge (p-1)k + k(t-p) =k(t-1)
\end{aligned}
\end{equation*}
when $\lambda_2\le \sqrt{ab} -\frac{k}{\sqrt{\lceil\frac{a+1}{2}\rceil\lceil\frac{b+1}{2}\rceil}}$. This contradicts \eqref{equ::5}. Thus the result follows.
\end{proof}

\subsection{The proofs of Theorems~\ref{thm:rigid} and \ref{thm:globally-rigid}}
For any partition $\pi$ of $V(G)$, let $E_{G}(\pi)$  denote the set of edges in $G$ whose endpoints lie in different parts of $\pi$, and let $e_{G}(\pi)=|E_{G}(\pi)|$. A part is \textit{trivial} if it contains a single vertex. Let $Z\subset V(G)$, and let $\pi$ be a partition of $V(G-Z)$ with $n_{0}$ trivial parts $v_1,v_2,\ldots, v_{n_0}$. Denote by $n_{Z}(\pi)=\sum_{1\leq i\leq n_0}|Z_i|$, where $Z_i$ is the set of vertices in $Z$ that are adjacent to $v_i$ for $1\leq i\leq n_0$. For any $v\in V(G)$, let $N_{G}(v)$ and $d_G(v)$ be the neighborhood and degree of $v$ in $G$, respectively. For any vertex $v\in V(G)$ and any subset $S\subset V(G)$, let $d_{S}(v)=|N_{G}(v) \cap S|$.

\begin{lemma}[\cite{Gu-1}]\label{lem::3.1}
A graph $G$ contains $k$ edge-disjoint spanning rigid subgraphs if for every $Z\subset V(G)$ and every partition $\pi$ of $V(G-Z)$ with $n_{0}$ trivial parts and $n_{0}'$ nontrivial parts, $$e_{G-Z}(\pi)\geq k(3-|Z|)n_{0}'+2kn_0-3k-n_{Z}(\pi).$$
\end{lemma}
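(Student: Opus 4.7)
The plan is to combine the matroid-union framework applied to $\mathcal{R}_{2}$, the 2-dimensional generic rigidity matroid on $E(G)$, with the Lov\'asz--Yemini cover formula for its rank function. A graph $H$ on $n$ vertices is rigid in $\mathbb{R}^{2}$ iff $E(H)$ has rank $2n-3$ in $\mathcal{R}_{2}$, and by Laman's theorem every spanning rigid subgraph contains a spanning Laman subgraph, which is a basis of $\mathcal{R}_{2}$. Hence $G$ contains $k$ edge-disjoint spanning rigid subgraphs iff $E(G)$ contains $k$ pairwise disjoint bases of $\mathcal{R}_{2}(G)$.

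By Edmonds' matroid partition (equivalently union) theorem, the existence of $k$ disjoint bases is equivalent to
$$|E(G)\setminus F|+k\cdot r_{\mathcal{R}_{2}}(F)\ \ge\ k(2n-3)\quad\text{for every }F\subseteq E(G),$$
where $n=|V(G)|$. I would argue by contraposition: supposing some $F\subseteq E(G)$ violates this inequality, I will extract a vertex set $Z\subset V(G)$ and a partition $\pi$ of $V(G-Z)$ that contradicts the counting hypothesis of the lemma.

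The extraction uses the Lov\'asz--Yemini rank formula
$$r_{\mathcal{R}_{2}}(F)=\min_{\mathcal{V}}\sum_{V_{i}\in\mathcal{V}}(2|V_{i}|-3),$$
where $\mathcal{V}$ ranges over families of vertex subsets with $|V_{i}|\ge 2$ whose induced edge sets cover $F$. Fix an optimal cover $\mathcal{V}=\{V_{1},\ldots,V_{m}\}$; let $Z$ be the set of vertices lying in two or more $V_{i}$, let the nontrivial parts of $\pi$ be the sets $V_{i}\setminus Z$ of size at least $2$, and let the trivial parts be the singletons $\{v\}$ for $v\in V(G)\setminus Z$ not covered by any $V_{i}$. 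By construction, edges of $E(G)\setminus F$ include every edge in $E_{G-Z}(\pi)$ together with the $n_{Z}(\pi)$ edges from trivial parts to $Z$. A direct expansion then rewrites $k(2n-3)-k\sum_{i}(2|V_{i}|-3)$ as $k(3-|Z|)n_{0}'+2kn_{0}-3k$ up to lower-order terms absorbed into $|E(G)\setminus F|$, so the violated matroid-union inequality becomes
$$e_{G-Z}(\pi)<k(3-|Z|)n_{0}'+2kn_{0}-3k-n_{Z}(\pi),$$
contradicting the hypothesis.

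The main obstacle is the bookkeeping in the correspondence between the optimal cover $\mathcal{V}$ and the pair $(Z,\pi)$: one must decide which multiply-covered vertices to absorb into $Z$, carefully handle singletons that drop out as trivial parts, and verify the precise algebraic identity between $\sum_{i}(2|V_{i}|-3)$ and $(3-|Z|)n_{0}'+2n_{0}-3$ (a careful double-counting of vertex incidences with the $V_{i}$). A subtler boundary case is when $G$ itself is not rigid, i.e.\ $r_{\mathcal{R}_{2}}(E(G))<2n-3$; here one still produces a contradicting pair $(Z,\pi)$ by taking $Z=\emptyset$ and letting $\pi$ be the partition associated to a rank-deficiency cover of the whole edge set $E(G)$ delivered by the Lov\'asz--Yemini formula.
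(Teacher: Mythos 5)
First, note that this paper does not prove Lemma~\ref{lem::3.1} at all: it is quoted from \cite{Gu-1}, and the proof there does run along the lines you propose (matroid union for the $2$-dimensional rigidity matroid together with the Lov\'asz--Yemini rank formula). So your overall strategy is the right one. The problem is that the step you set aside as ``bookkeeping'' is where the entire content of the lemma lives, and as described your construction does not go through. Concretely: (i) the family you build need not be a partition of $V(G-Z)$ --- a vertex $v$ covered by exactly one $V_i$ with $V_i\setminus Z=\{v\}$ is neither in a nontrivial part nor among your declared trivial parts (uncovered singletons), so it must be added as a trivial part; (ii) once it is, the edges from such a $v$ to $Z\cap V_i$ belong to $F$, not to $E(G)\setminus F$, so the inequality $|E(G)\setminus F|\ge e_{G-Z}(\pi)+n_Z(\pi)$ that your ``absorbed into $|E(G)\setminus F|$'' step implicitly relies on is false in general. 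These $F$-edges have to be paid for out of the terms $2|V_i|-3$, which requires writing $\sum_i(2|V_i|-3)$ via the covering multiplicities of the vertices of $Z$ and verifying that the assumed deficiency $|E(G)\setminus F|+k\sum_i(2|V_i|-3)<k(2n-3)$ genuinely forces $e_{G-Z}(\pi)<k(3-|Z|)n_0'+2kn_0-3k-n_Z(\pi)$. Nothing in the proposal establishes this identity, and it is not a formality: the coefficient $3-|Z|$ and the term $-n_Z(\pi)$ both emerge only from that computation.

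There is a second gap: Edmonds' base-packing criterion with right-hand side $k(2n-3)$ presupposes $r_{\mathcal{R}_2}(E(G))=2n-3$, i.e.\ that $G$ is rigid, and your treatment of the non-rigid case is circular --- a Lov\'asz--Yemini cover is a family of (possibly overlapping) vertex sets, not a partition, so ``the partition associated to a rank-deficiency cover'' is precisely the object you have not constructed. All three issues can be repaired by choosing the minimizing cover canonically, e.g.\ as the rigid components of $G[F]$, which pairwise intersect in at most one vertex; then $Z$ = vertices lying in at least two components, the components not contained in $Z$ become the parts, and the double count closes. Until that computation is actually carried out, the proof is incomplete.
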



\begin{proof}[\bf The proof of Theorem~\ref{thm:rigid}]
Assume to the contrary that $G$ contains no $k$ edge-disjoint spanning rigid subgraphs. By Lemma \ref{lem::3.1}, there exist a subset $Z$ of $V$ and a partition $\pi$ of $V(G-Z)$ with $n_{0}$ trivial parts $v_1,v_2,\ldots, v_{n_0}$ and $n_{0}'$ nontrivial parts $V_1,V_2,\ldots,V_{n_0'}$ such that
\begin{equation}\label{equ::r1}
\begin{aligned}
e_{G-Z}(\pi)\leq k(3-|Z|)n_{0}'+2kn_0-3k-n_{Z}(\pi)-1,
\end{aligned}
\end{equation}
where $n_{Z}(\pi)=\sum_{1\leq i\leq n_0}|Z_i|$, and $Z_i$ is the set of vertices in $Z$ that are adjacent to $v_i$ for $1\leq i\leq n_0$. Let $V(G-Z)=V^{*}$ and let $\delta$ be the minimum degree of $G$. Then $\delta=\min\{a,b\}\geq 6k$. Observe that $d_{G-Z}(v_i)\geq \delta-|Z_i|\geq 6k-|Z_i|$ for $1\leq i\leq n_0$. Thus
\begin{equation*}
\begin{aligned}
e_{G-Z}(\pi)&= \frac{1}{2}\left(\sum_{1\leq i\leq n_{0}'}e(V_i, V^{*}-V_i)+\sum_{1\leq j\leq n_0}d_{G-Z}(v_j)\right)\\
&\geq \frac{1}{2}\left(\sum_{1\leq i\leq n_{0}'}e(V_i, V^{*}-V_i)+\delta n_0-\sum_{1\leq j\leq n_0}|Z_j|\right)\\
&\geq \frac{1}{2}\left(\sum_{1\leq i\leq n_{0}'}e(V_i, V^{*}-V_i)+6kn_0-n_{Z}(\pi)\right),
\end{aligned}
\end{equation*}
and hence
\begin{equation}\label{equ::r3}
\begin{aligned}
e_{G-Z}(\pi)&\geq 3kn_{0}-\frac{1}{2}n_{Z}(\pi).
           \end{aligned}
\end{equation}
We have the following two claims.

{\flushleft\bf Claim 1.} $|Z|\leq 2$.

Otherwise, $|Z|\geq 3$. By (\ref{equ::r1}),
\begin{equation*}
\begin{aligned}
e_{G-Z}(\pi)\leq k(3-|Z|)n_{0}'+2kn_0-3k-n_{Z}(\pi)-1\leq 2kn_0-3k-n_{Z}(\pi)-1.
\end{aligned}
\end{equation*}
Combining this with (\ref{equ::r3}) yields that $kn_0+3k+\frac{1}{2}n_{Z}(\pi)+1\leq 0$, which is impossible because $n_{0}\geq 0$, $k\geq 1$ and $n_{Z}(\pi)\geq 0$.

{\flushleft\bf Claim 2.} $n_{0}'\geq 2$.

Otherwise, $n_{0}'\leq 1$. By Claim 1 and (\ref{equ::r1}), we get
\begin{equation*}
\begin{aligned}
e_{G-Z}(\pi)\leq k(3-|Z|)n_{0}'+2kn_0-3k-n_{Z}(\pi)-1\leq 2kn_0-k|Z|-1-n_{Z}(\pi).
\end{aligned}
\end{equation*}
Combining this with (\ref{equ::r3}), we have $kn_0+1+\frac{1}{2}n_{Z}(\pi)+k|Z|\leq 0$, which is also impossible.

Assume that $e(V_1, V^{*}-V_1)\leq e(V_2, V^{*}-V_2)\leq \cdots\leq e(V_{n'_{0}}, V^{*}-V_{n'_{0}})$. Let $q$ be the largest index such that $e(V_q, V^{*}-V_q)\leq 6k-2k|Z|-1$. We first assert that $q\geq 2$. If not, $e(V_2, V^{*}-V_2)\geq 6k-2k|Z|$. Then
\begin{equation*}
\begin{aligned}
e_{G-Z}(\pi)&= \frac{1}{2}\left(\sum_{1\leq i\leq n_{0}'}e(V_i, V^{*}-V_i)+\sum_{1\leq j\leq n_0}d_{G-Z}(v_j)\right)\\
&\geq \frac{1}{2}\left((6k-2k|Z|)(n'_0-1)+6kn_0-n_{Z}(\pi)\right)\\
&= k(3-|Z|)n_{0}'+2kn_0-3k-n_{Z}(\pi)-1+kn_0+1+k|Z|+\frac{1}{2}n_{Z}(\pi)\\
&>k(3-|Z|)n_{0}'+2kn_0-3k-n_{Z}(\pi)-1~~(\mbox{since $|Z|\geq 0$, $n_{Z}(\pi)\geq 0$, $k\geq 1$ and $n_0\geq 0$}),
\end{aligned}
\end{equation*}
contradicting (\ref{equ::r1}). This implies that $q\geq 2$. We next assert that $V_i\cap X\neq \emptyset$ for $1\leq i\leq q$. If not, there exists some $j$ ($1\leq j\leq q$) such that $V_j\subseteq Y$. Combining this with Claim 1, $|V_j|\geq 2$, $k\geq 1$ and $b\geq 6k$, we have 
$$e(V_j, V^{*}-V_j)\geq e(V_j, V-V_j)-|Z||V_j|=(b-|Z|)|V_j| \ge 2(b-|Z|)> 6k-2k|Z|,$$ 
a contradiction. Similarly $V_i\cap Y\neq \emptyset$ for $1\leq i\leq q$. Let $V_i\cap X = S_i$, $V_i\cap Y = T_i$, $X-S_i =S_i'$ and $Y-T_i = T'_i$ where $1\leq i\leq q$. We also assert that $2|S_i|\ge b+1-|Z|$ for $1\leq i\leq q$. If not, there exists some $j$ ($1\leq j\leq q$) such that $2|S_j|\le b-|Z|$. Since $d_{V_j}(v)\leq \max\{|S_j|,|T_j|\}$ for each $v\in Z$, it follows that
\begin{equation*}
\begin{aligned}
e(V_j, V^{*}-V_j)&\geq a|S_j| + b|T_j|- 2e(S_j, T_j)-|Z|\max\{|S_j|,|T_j|\}\\
&\geq a|S_j| + b|T_j| - 2|S_j||T_j|-|Z|\max\{|S_j|,|T_j|\}\\
&\geq a|S_j| + |Z||T_j| -|Z|\max\{|S_j|,|T_j|\} ~~~(\mbox{since $2|S_j|\le b-|Z|$})\\
&\geq a~~(\mbox{since $0\leq |Z|\leq 2$, $|T_j|\geq 1$ and $|S_j|\geq 1$})\\
&\geq 6k-2k|Z|~~(\mbox{since $a\geq 6k$, $k\geq 1$ and $0\leq |Z|\leq 2$}),
\end{aligned}
\end{equation*}
a contradiction. Similarly, we have $2|S'_i|\ge b+1-|Z|$, $2|T_i|\ge a+1-|Z|$ and $2|T'_i|\ge a+1-|Z|$ for $1\leq i\leq q$. Let $|X| =x$, $|Y| =y$, $|S_i| =s_i$ and $|T_i| =t_i$ where $1\leq i\leq q$. By using the same argument as \eqref{eq:evi}, we have 
$$e(V_i, V-V_i)\geq \frac{2(\sqrt{ab}-\lambda_2)}{\sqrt{xy}}\cdot \sqrt{s_i(x-s_i)t_i(y-t_i)},$$
where $1\leq i\leq q$. Note that $s_i(x-s_i)$ is minimized at $s_i=\lceil\frac{b+1-|Z|}{2}\rceil$, and $t_i(y-t_i)$ is minimized at $t_i=\lceil\frac{a+1-|Z|}{2}\rceil$. Then for $1\leq i\leq q$,
$$e(V_i, V-V_i)\geq\frac{2(\sqrt{ab}\!-\!\lambda_2)}{\sqrt{xy}}\cdot \sqrt{\Big\lceil\frac{b+1\!-\!|Z|}{2}\Big\rceil\Big(x\!-\!\Big\lceil\frac{b+1\!-\!|Z|}{2}\Big\rceil\Big)\Big\lceil\frac{a\!+\!1-|Z|}{2}\Big\rceil\Big(y\!-\!\Big\lceil\frac{a+1\!-\!|Z|}{2}\Big\rceil\Big)}.$$
Notice that $x\ge \sum_{1\le i\le q}|S_i| \ge q\lceil\frac{b+1-|Z|}{2}\rceil$ and $y\ge \sum_{1\le i\le q}|T_i|\ge q\lceil\frac{a+1-|Z|}{2}\rceil$. Thus $1 -\frac{\lceil\frac{b+1-|Z|}{2}\rceil}{x}\ge\frac{q-1}{q}$ and $1 -\frac{\lceil\frac{a+1-|Z|}{2}\rceil}{y}\ge\frac{q-1}{q}$. Combining this with $d_{G}(v)\leq \max\{a,b\}$ for each $v\in Z$, $e(V_i, V-V_i)\geq e(V_i, V^*-V_i)\geq 6k-2k|Z|$ for $q+1\leq i\leq n'_0$  and $d_{G}(v_i)\geq 6k$ for $1\leq i\leq n_0$, we have
\begin{equation*}
\begin{aligned}
&e_{G-Z}(\pi)\\
&=\frac{1}{2}\left(\sum_{1\leq i\leq n_{0}'}e(V_i, V^{*}-V_i)+\sum_{1\leq j\leq n_0}d_{G-Z}(v_j)\right)\\
&\geq \frac{1}{2}\left(\sum_{1\leq i\leq n_{0}'}e(V_i, V-V_i)+\sum_{1\leq j\leq n_0}d_{G}(v_j)-|Z|\max\{a,b\}\right)\\
&\geq q(\sqrt{ab}-\lambda_2)\cdot\sqrt{\Big\lceil\frac{b+1-|Z|}{2}\Big\rceil\Big(1\!-\!\frac{\lceil\frac{b+1-|Z|}{2}\rceil}{x}\Big)\Big\lceil\frac{a+1-|Z|}{2}\Big\rceil\Big(1\!-\!\frac{\lceil\frac{a+1-|Z|}{2}\rceil}{y}\Big)}\!+\!k(n'_0\!-\!q)(3\!-\!|Z|)\\
&~~~+\frac{\sum_{1\leq j\leq n_0}d_{G}(v_j)}{2}-\frac{|Z|\max\{a,b\}}{2}\\
&\geq (q-1)(\sqrt{ab}-\lambda_2)\cdot\sqrt{\Big\lceil\frac{a+1-|Z|}{2}\Big\rceil\Big\lceil\frac{b+1-|Z|}{2}\Big\rceil}+k(n'_0-q)(3-|Z|)+3kn_0-\frac{|Z|\max\{a,b\}}{2}\\
&= k(3\!-\!|Z|)n_{0}'\!+\!2kn_0\!-\!3k\!-\!n_{Z}(\pi)-1\!+\!(q\!-\!1)(\sqrt{ab}\!-\!\lambda_2)\cdot\sqrt{\Big\lceil\frac{a\!+\!1\!-\!|Z|}{2}\Big\rceil\Big\lceil\frac{b\!+\!1\!-\!|Z|}{2}\Big\rceil}\!+\!kn_0+n_{Z}(\pi)\\
&~~+qk|Z|-\frac{|Z|\max\{a,b\}}{2}-3k(q-1)+1\\
&\geq k(3-|Z|)n_{0}'+2kn_0-3k-n_{Z}(\pi)-1+\Big(q-1-\frac{|Z|}{2}\Big)\max\{a,b\}+ qk|Z|+1\\
&~~(\mbox{since $\lambda_2\le \sqrt{ab} -\frac{3k+\max\{a,b\}}{\sqrt{\lceil\frac{a-1}{2}\rceil\lceil\frac{b-1}{2}\rceil}}$, $k\geq 1$, $n_0\geq 0$, $q\geq 2$ and $n_{Z}(\pi)\geq 0$})\\
&> k(3\!-\!|Z|)n_{0}'+2kn_0\!-\!3k\!-\!n_{Z}(\pi)-1~~(\mbox{since $q\geq 2$, $\max\{a,b\}\geq 6k$, $k\geq 1$ and $0\leq |Z|\leq 2$}),\\
\end{aligned}
\end{equation*}
contradicting (\ref{equ::r1}). Thus the result follows.
\end{proof}

  
\begin{proof}[\bf The proof of Theorem~\ref{thm:globally-rigid}]
By a combination of the results in \cite{Conn05, JaJo05}, a graph $G$ with more than $3$ vertices is globally rigid in $\mathbb{R}^2$ if and only if $G$ is 3-connected and redundantly rigid. Direct calculation yields that
\begin{eqnarray*}
\lambda_2\leq\sqrt{ab} -\frac{3+\max\{a,b\}}{\sqrt{\lceil\frac{a-2}{2}\rceil\lceil\frac{b-2}{2}\rceil}}< \sqrt{ab} -\frac{\max\{a,b\}}{\sqrt{ab - 2\max\{a,b\}}},
\end{eqnarray*}
due to $a\geq 6$ and $b\geq 6$. By Theorem~\ref{thm:conn}, $G$ is 3-connected.

It remains to show that $G$ is redundantly rigid. Suppose not, then there is an edge $f$ of $G$ such that $G-f$ is not rigid. Furthermore, by Lemma \ref{lem::3.1}, there exist a subset $Z$ of $V(G)$ and a partition $\pi$ of $V(G-f-Z)$ with $n_{0}$ trivial parts $v_1,v_2,\ldots, v_{n_0}$ and $n_{0}'$ nontrivial parts $V_1,V_2,\ldots,V_{n_0'}$ such that
\begin{equation}\label{equ::g1}
\begin{aligned}
e_{G-f-Z}(\pi)\leq (3-|Z|)n_{0}'+2n_0-4-n_{Z}(\pi).
\end{aligned}
\end{equation}
Note that $e_{G-f-Z}(\pi)\geq e_{G-Z}(\pi)-1$. Then
\begin{equation}\label{equ::g2}
\begin{aligned}
e_{G-Z}(\pi)\leq (3-|Z|)n_{0}'+2n_0-3-n_{Z}(\pi).
\end{aligned}
\end{equation}
Let $V(G-Z)=V^{*}$. Observe that $d_{G-Z}(v_i)\geq \min\{a,b\}-|Z_i|\geq 6-|Z_i|$ for $1\leq i\leq n_0$. Thus
\begin{equation}\label{equ::g3}
\begin{aligned}
e_{G-Z}(\pi)&= \frac{1}{2}\left(\sum_{1\leq i\leq n_{0}'}e(V_i, V^{*}-V_i)+\sum_{1\leq j\leq n_0}d_{G-Z}(v_j)\right)\geq 3n_{0}-\frac{1}{2}n_{Z}(\pi).
\end{aligned}
\end{equation}
We get the following two claims.

{\flushleft\bf Claim 1.} $|Z|\leq 2$.

Otherwise, $|Z|\geq 3$. Combining this with (\ref{equ::g2}) and (\ref{equ::g3}), we have
$n_0+3+\frac{1}{2}n_{Z}(\pi)\leq 0$. This is impossible because $n_{0}\geq 0$ and $n_{Z}(\pi)\geq 0$.

{\flushleft\bf Claim 2.} $n_{0}'\geq 2$.

Otherwise, $n_{0}'\leq 1$. By Claim 1, (\ref{equ::g2}) and (\ref{equ::g3}), we have
 $n_0+|Z|+\frac{1}{2}n_{Z}(\pi)\leq 0$. This implies that $n_0=0$, $n'_0=1$, $n_{Z}(\pi)=0$ and $|Z|=0$. From (\ref{equ::g1}), we obtain that 
$e_{G-f-Z}(\pi)\leq -1$, which is impossible.

Assume that $e(V_1, V^{*}-V_1)\leq e(V_2, V^{*}-V_2)\leq \cdots\leq e(V_{n'_{0}}, V^{*}-V_{n'_{0}})$. Let $q$ be the largest index such that $e(V_q, V^{*}-V_q)\leq 6-2|Z|$. We first assert that $q\geq 2$. If not, $e(V_2, V^{*}-V_2)\geq 7-2|Z|$. Then
\begin{equation*}
\begin{aligned}
e_{G-Z}(\pi)&= \frac{1}{2}\left(\sum_{1\leq i\leq n_{0}'}e(V_i, V^{*}-V_i)+\sum_{1\leq j\leq n_0}d_{G-Z}(v_j)\right)\\
&\geq \frac{1}{2}\left((7-2|Z|)(n'_0-1)+6n_0-n_{Z}(\pi)\right)\\
&\geq (3-|Z|)n_{0}'+2n_0-3-n_{Z}(\pi)+n_0+\frac{1}{2}(n'_0-1)+|Z|+\frac{1}{2}n_{Z}(\pi)\\
&>(3-|Z|)n_{0}'+2n_0-3-n_{Z}(\pi)~~(\mbox{since $|Z|\geq 0$, $n_{Z}(\pi)\geq 0$, $n'_0\geq 2$ and $n_0\geq 0$}),
\end{aligned}
\end{equation*}
contradicting (\ref{equ::g2}). It follows that $q\geq 2$. We next assert that $V_i\cap X\neq \emptyset$ for $1\leq i\leq q$. If not, there exists some $j$ ($1\leq j\leq q$) such that $V_j\subseteq Y$. Combining this with Claim 1, $|V_j|\geq 2$ and $b\geq 6$, we have 
$$e(V_j, V^{*}-V_j)\geq e(V_j, V-V_j)-|Z||V_j|=(b-|Z|)|V_j| \ge 2(b-|Z|)> 6-2|Z|,$$ 
a contradiction. Similarly $V_i\cap Y\neq \emptyset$ for $1\leq i\leq q$. Let $V_i\cap X = S_i$, $V_i\cap Y = T_i$, $X-S_i =S_i'$ and $Y-T_i = T'_i$ where $1\leq i\leq q$. We assert that $2|S_i|\ge b-|Z|$ for $1\leq i\leq q$. If not, there exists some $j$ ($1\leq j\leq q$) such that $2|S_j|\le b-|Z|-1$. Since $d_{V_j}(v)\leq \max\{|S_j|,|T_j|\}$ for each $v\in Z$, it follows that
\begin{equation*}
\begin{aligned}
e(V_j, V^{*}-V_j)&\geq a|S_j| + b|T_j|- 2e(S_j, T_j)-|Z|\max\{|S_j|,|T_j|\}\\
&\geq a|S_j| + b|T_j| - 2|S_j||T_j|-|Z|\max\{|S_j|,|T_j|\}\\
&\geq a|S_j| + (|Z|+1)|T_j| -|Z|\max\{|S_j|,|T_j|\} ~~~(\mbox{since $2|S_j|\le b-|Z|-1$})\\
&\geq a+1~~(\mbox{since $0\leq |Z|\leq 2$, $|T_j|\geq 1$ and $|S_j|\geq 1$})\\
&\geq 7-2|Z|~~(\mbox{since $a\geq 6$ and $0\leq |Z|\leq 2$}),
\end{aligned}
\end{equation*}
a contradiction. Similarly, we have $2|S'_i|\ge b-|Z|$, $2|T_i|\ge a-|Z|$ and $2|T'_i|\ge a-|Z|$ for $1\leq i\leq q$. Let $|X| =x$, $|Y| =y$, $|S_i| =s_i$ and $|T_i| =t_i$ where $1\leq i\leq q$. By using the same argument as \eqref{eq:evi}, we have 
$$e(V_i, V-V_i)\geq \frac{2(\sqrt{ab}-\lambda_2)}{\sqrt{xy}}\cdot \sqrt{s_i(x-s_i)t_i(y-t_i)},$$
where $1\leq i\leq q$. Note that $s_i(x-s_i)$ is minimized at $s_i=\lceil\frac{b-|Z|}{2}\rceil$, and $t_i(y-t_i)$ is minimized at $t_i=\lceil\frac{a-|Z|}{2}\rceil$. Then for $1\leq i\leq q$,
$$e(V_i, V-V_i)\geq\frac{2(\sqrt{ab}-\lambda_2)}{\sqrt{xy}}\cdot \sqrt{\Big\lceil\frac{b-|Z|}{2}\Big\rceil\Big(x\!-\!\Big\lceil\frac{b-|Z|}{2}\Big\rceil\Big)\Big\lceil\frac{a-|Z|}{2}\Big\rceil\Big(y\!-\!\Big\lceil\frac{a-|Z|}{2}\Big\rceil\Big)}.$$
Notice that $x\ge \sum_{1\le i\le q}|S_i| \ge q\lceil\frac{b-|Z|}{2}\rceil$ and $y\ge \sum_{1\le i\le q}|T_i|\ge q\lceil\frac{a-|Z|}{2}\rceil$. Thus $1 -\frac{\lceil\frac{b-|Z|}{2}\rceil}{x}\ge\frac{q-1}{q}$ and $1 -\frac{\lceil\frac{a-|Z|}{2}\rceil}{y}\ge\frac{q-1}{q}$. Combining this with $d_{G}(v)\leq \max\{a,b\}$ for each $v\in Z$, $e(V_i, V-V_i)\geq 7-2|Z|$ for $q+1\leq i\leq n'_0$  and $d_{G}(v_i)\geq 6$ for $1\leq i\leq n_0$, we have

\begin{equation*}
\begin{aligned}
&e_{G-Z}(\pi)\\
&\geq \frac{1}{2}\left(\sum_{1\leq i\leq n_{0}'}e(V_i, V-V_i)+\sum_{1\leq j\leq n_0}d_{G}(v_j)-|Z|\max\{a,b\}\right)\\
&\geq q(\sqrt{ab}-\lambda_2)\cdot\sqrt{\Big\lceil\frac{b-|Z|}{2}\Big\rceil\Big(1\!-\!\frac{\lceil\frac{b-|Z|}{2}\rceil}{x}\Big)\Big\lceil\frac{a-|Z|}{2}\Big\rceil\Big(1\!-\!\frac{\lceil\frac{a-|Z|}{2}\rceil}{y}\Big)}\!+\!(n'_0\!-\!q)\Big(\frac{7}{2}\!-\!|Z|\Big)+3n_0\\
&~~~-\frac{|Z|\max\{a,b\}}{2}\\
&\geq (q-1)(\sqrt{ab}-\lambda_2)\cdot\sqrt{\Big\lceil\frac{a-|Z|}{2}\Big\rceil\Big\lceil\frac{b-|Z|}{2}\Big\rceil}+(n'_0\!-\!q)\Big(\frac{7}{2}\!-\!|Z|\Big)+3n_0-\frac{|Z|\max\{a,b\}}{2}\\
&= (3\!-\!|Z|)n_{0}'\!+\!2n_0\!-\!3\!-\!n_{Z}(\pi)\!+\!(q\!-\!1)(\sqrt{ab}\!-\!\lambda_2)\cdot\sqrt{\Big\lceil\frac{a\!-\!|Z|}{2}\Big\rceil\Big\lceil\frac{b\!-\!|Z|}{2}\Big\rceil}\!+\!n_0+n_{Z}(\pi)+q|Z|\\
&~~-\frac{|Z|\max\{a,b\}}{2}-3(q-1)+\frac{n_{0}'-q}{2}\\
&\geq (3-|Z|)n_{0}'+2n_0-3-n_{Z}(\pi)+\Big(q-1-\frac{|Z|}{2}\Big)\max\{a,b\}+ q|Z|+n_0+\frac{n_{0}'-q}{2}\\
&~~(\mbox{since $\lambda_2\le \sqrt{ab} -\frac{3+\max\{a,b\}}{\sqrt{\lceil\frac{a-2}{2}\rceil\lceil\frac{b-2}{2}\rceil}}$, $n_0\geq 0$, $q\geq 2$ and $n_{Z}(\pi)\geq 0$})\\
&> (3\!-\!|Z|)n_{0}'\!+\!2n_0\!-\!3\!-\!n_{Z}(\pi)~~(\mbox{since $q\geq 2$, $\max\{a,b\}\geq 6$, $n_0\geq 0$, $n_{0}'\geq q$ and $0\leq |Z|\leq 2$}),\\
\end{aligned}
\end{equation*}
contradicting (\ref{equ::g2}). Thus the result follows.
\end{proof}

\section{Final remarks}
By the bipartite expander mixing lemma, biregular bipartite graphs are kinds of pseudorandom graphs and expanders. In this paper, we proved several spectral expansion properties of biregular bipartite graphs via the second largest adjacency eigenvalue. There are two related problems that we would like to mention.

Defined by Chv\'atal \cite{Chva73} in 1973, the {\em toughness} $t(G)$ of a connected non-complete graph $G$ is defined as $t(G)=\min\left\{\frac{|S|}{\omega(G-S)}\right\}$, in which the minimum is taken over all proper subsets $S$ of $V(G)$ such that $G-S$ is disconnected and $\omega(G-S)$ denotes the number of components of $G-S$. Toughness of regular graphs from eigenvalues was first studied by Alon~\cite{Alon95} who proved that for any connected $d$-regular graph $G$, $t(G)>\frac{1}{3}\left(\frac{d^2}{d\lambda+\lambda^2} -1\right)$. Brouwer~\cite{Brou95} independently showed that $t(G)>\frac{d}{\lambda}-2$ for any connected $d$-regular graph $G$, and he also conjectured that $t(G) \ge\frac{d}{\lambda}-1$ in \cite{Brou95, Brou96}. The conjecture has been settled in~\cite{Gu21b} and was extended to general graphs in~\cite{GH22}. For bipartite graphs, obviously the toughness is at most $1$, and some spectral conditions were obtained in \cite{CiWo14,CiGu16}. For $(a,b)$-biregular bipartite graphs, the upper bound on toughness can be improved to $\min\{a/b,b/a\}$. It would be interesting to investigate toughness of biregular bipartite graphs via eigenvalues.

Let $c$ be a positive integer. A {\it $c$-forest} is a forest with exactly $c$ components. The {\it $c$th-order edge toughness} of a graph $G$ is defined as $\tau_c(G) = \min\left\{\frac{|F|}{\omega(G-F)-c} \right\}$, where $\omega(G-F)$ denotes the number of components of $G-F$ and the minimum is taken over all subsets $F$ of $E(G)$ such that $\omega(G-F)>c$. Generalizing the spanning tree packing theorem, Chen, Koh and Peng~\cite{ChKP93} proved that for positive integers $k$ and $c =1, 2, \cdots, |V(G)|-1$, a graph $G$ has $k$ edge-disjoint spanning $c$-forests if and only if $\tau_c(G) \ge k$. For $c=1$, this is a reformulation of Theorem~\ref{Nash-Williams-Tutte}. Cioab\u{a} and Wong~\cite{CiWo12} proposed an open problem to find connections between eigenvalues and $\tau_c(G)$ for $c\ge 2$. It is interesting to study this question for biregular bipartite graphs.

\subsection*{Acknowledgements}
Dandan Fan was supported by the National Natural Science Foundation of China (No.\,12301454) and the Natural Science Foundation of Xinjiang Uygur Autonomous Region (No.\,2022D01B103), Xiaofeng Gu was supported by a grant from the Simons Foundation (No.\,522728), and Huiqiu Lin was supported by the National Natural Science Foundation of China (Nos.\,12271162 and \,12326372), Natural Science Foundation of Shanghai (Nos.\,22ZR1416300 and \,23JC1401500) and the Program for Professor of Special Appointment (Eastern Scholar) at Shanghai Institutions of Higher Learning (No.\,TP2022031).

\end{document}